\newtheorem{thm}{Theorem}[section]
\newtheorem{lem}{Lemma}[section]
\newtheorem{rem}{Remark}[section]
\theoremstyle{notation}
\newtheorem*{notation}{Notation}
\newcommand{\R}{\mathbb{R}}
\newcommand{\C}{\mathbb{C}}
\numberwithin{equation}{section}
\newcommand{\eps}{\epsilon}
\newcommand{\wto}{\rightharpoonup}
\makeatletter \@addtoreset{equation}{section} \makeatother
\newcounter{const}
\author[T. Gou]{Tianxiang Gou}
\address[T. Gou]{%
\centerline{School of Mathematics and Statistics, Xi’an Jiaotong University,}
\centerline{710049, Xi’an, Shaanxi, China}}
\subjclass[2010]{35A02; 35R11}
\keywords{Uniqueness, Ground states, Harmonic potential, Fractional elliptic equations}
\email{tianxiang.gou@xjtu.edu.cn}
\title[Uniqueness of ground states]{Uniqueness of ground states to fractional nonlinear elliptic equations with harmonic potential}
\thanks{{\it Acknowledgments}. The author was supported by the National Natural Science Foundation of China (No. 12101483) and the Postdoctoral Science Foundation of China (No. 2021M702620). The author would like to thank warmly the referee for the helpful and constructive comments to improve the manuscript.}
\thanks{{\it Conflict of interest statement}. The author declares that there is no conflict of interests.}
\begin{document}

\begin{abstract}
In this paper, we prove the uniqueness of ground states to the following fractional nonlinear elliptic equation with harmonic potential,
$$
(-\Delta)^s u+ \left(\omega+|x|^2\right) u=|u|^{p-2}u \quad \mbox{in}\,\, \R^n,
$$
where $n \geq 1$, $0<s<1$, $\omega>-\lambda_{1,s}$, $2<p<\frac{2n}{(n-2s)^+}$, $\lambda_{1,s}>0$ is the lowest eigenvalue of $(-\Delta)^s + |x|^2$. The fractional Laplacian $(-\Delta)^s$ is characterized as $\mathcal{F}((-\Delta)^{s}u)(\xi)=|\xi|^{2s} \mathcal{F}(u)(\xi)$ for $\xi \in \R^n$, where $\mathcal{F}$ denotes the Fourier transform. This solves an open question in \cite{SS} concerning the uniqueness of ground states.
\end{abstract}

\maketitle

\thispagestyle{empty}

\section{Introduction}

In this paper, we study the uniqueness of ground states to the following fractional nonlinear elliptic equation with harmonic potential,
\begin{align} \label{equ}
(-\Delta)^s u+ \left(\omega+|x|^2\right) u=|u|^{p-2}u \quad \mbox{in}\,\, \R^n,
\end{align}
where $n \geq 1$, $0<s<1$, $\omega>-\lambda_{1,s}$, $2<p<2_s^*:=\frac{2n}{(n-2s)^+}$ and $\lambda_{1,s}>0$ is the lowest eigenvalue of $(-\Delta)^s + |x|^2$, which is defined by
\begin{align} \label{eigenvalue}
\lambda_{1, s}:=\inf_{u \in \Sigma_s} \left\{ \left \langle\left((-\Delta)^s + |x|^2\right) u, u \right \rangle :  \|u\|_2 =1 \right\}, \quad \Sigma_s:=H^s(\R^n) \cap L^2(\R^n; |x|^2 \, dx).
\end{align}
The fractional Laplacian $(-\Delta)^s$ is characterized as $\mathcal{F}((-\Delta)^{s}u)(\xi)=|\xi|^{2s} \mathcal{F}(u)(\xi)$ for $\xi \in \R^n$, where $\mathcal{F}$ denotes the Fourier transform defined by
$$
\mathcal{F}(u)(\xi):=\int_{\R^n} e^{-2 \pi \textnormal{i} x \cdot \xi} u(x)\,dx.
$$
For $0<s<1$, the fractional Sobolev space $H^s(\R^n)$ is defined by
$$
H^s(\R^n):=\left\{u\in L^2(\R^n) \, : \, \int_{\R^n}\left(1+|\xi|^{2s}\right) |\mathcal{F}(u)|^2 \, d\xi<\infty\right\}
$$
equipped with the norm
$$
\|u\|^2_{H^s}=\int_{\R^n}\left(1+|\xi|^{2s}\right) |\mathcal{F}(u)|^2 \, d\xi.
$$

The problem under consideration arises in the study of standing waves to the following time-dependent Schr\"odinger equation,
\begin{align} \label{equt}
\textnormal{i} \partial_t \psi+(-\Delta)^s \psi+ |x|^2\psi=|\psi|^{p-2}\psi \quad \mbox{in}\,\, \R \times \R^n.
\end{align}
Here a standing wave to \eqref{equt} is a solution of the form
$$
\psi(t,x)=e^{-\textnormal{i}\omega t} u(x), \quad \omega \in \R.
$$
It is simple to see that $\psi$ is a solution to \eqref{equt} if and only if $u$ is a solution to \eqref{equ}. The equation \eqref{equ} is of particular interest in fractional quantum mechanics and originates from the early work of Laskin \cite{L1,L2}.

For the case $s=1$, the uniqueness of ground states to \eqref{equ} was achieved in \cite{HO1, HO2}. However, for the case $0<s<1$, the uniqueness of ground states to \eqref{equ} is open so far. The aim of this paper is to make a contribution towards this direction.

In the present paper, we are only concerned with the uniqueness of ground states to \eqref{equ}, the existence of which is a simple consequence of the use of mountain pass theorem, see \cite[Theorem 1.15]{W}, and the fact that $\Sigma_s$ is compactly embedded into $L^q(\R^n)$ for any $2 \leq q<2_s^*$, see \cite[Lemma 3.1]{DH}. Moreover, in view of the maximum principle, we can further obtain that any ground state to \eqref{equ} is positive. The main result of the paper reads as follows.

\begin{thm} \label{thm1}
Let $n \geq 1$, $0<s<1$, $\omega>-\lambda_{1,s}$ and $2<p<2_s^*$. Then ground state to \eqref{equ} is unique.
\end{thm}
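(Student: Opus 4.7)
The plan is to combine a symmetrization argument with a spectral analysis of the linearized operator and a continuation technique in the parameter $\omega$. Since the potential $|x|^2$ is radially increasing, the first step is to show that every ground state is radial, positive, and radially decreasing. Applying Schwarz symmetrization $u \mapsto u^*$, the fractional Pólya--Szegő inequality gives $\int_{\R^n} |(-\Delta)^{s/2} u^*|^2\,dx \leq \int_{\R^n} |(-\Delta)^{s/2} u|^2\,dx$, and the Hardy--Littlewood rearrangement inequality applied to the radially increasing weight $|x|^2$ gives $\int_{\R^n} |x|^2 (u^*)^2\,dx \leq \int_{\R^n} |x|^2 u^2\,dx$, while $\|u^*\|_2 = \|u\|_2$ and $\|u^*\|_p = \|u\|_p$. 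Combined with the least-energy property and the strong maximum principle for $(-\Delta)^s$ plus a potential, this forces any ground state to coincide with its symmetric-decreasing rearrangement and to be strictly positive.

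Next, fix a positive radial ground state $Q$ and introduce the linearized operator
\[ L_+ := (-\Delta)^s + \omega + |x|^2 - (p-1) Q^{p-2}, \]
which is self-adjoint on $L^2(\R^n)$ with form domain $\Sigma_s$. Since $\Sigma_s$ is compactly embedded in $L^q(\R^n)$ for $q < 2_s^*$, $L_+$ has purely discrete spectrum bounded below. The mountain-pass characterization of $Q$, together with the identity $\langle L_+ Q, Q\rangle = -(p-2)\|Q\|_p^p < 0$, forces the Morse index of $L_+$ to be exactly one.

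The crucial point, and the main obstacle, is to prove the non-degeneracy $\ker L_+ = \{0\}$. I would decompose $L^2(\R^n)$ into spherical harmonic sectors $\cH_\ell$ for $\ell \geq 0$. Since $(-\Delta)^s$, multiplication by $|x|^2$, and multiplication by $Q^{p-2}$ all commute with rotations, $L_+$ respects this decomposition. On each sector $\ell \geq 1$ a Perron--Frobenius argument on the radial sector, together with the Morse-index-one property, should force the restriction of $L_+$ to $\cH_\ell$ to be strictly positive (the sole negative direction $Q$ lies in $\cH_0$). On the radial sector $\ell = 0$ one needs a finer argument: any radial element in $\ker L_+$ would, via an implicit-function deformation, produce a curve of ground states contradicting the Morse index, or it can be excluded by a direct integral identity in the spirit of Frank--Lenzmann--Silvestre, adapted to the harmonic-potential setting where scale invariance is lost and one must rely on rotational and parity symmetries.

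With non-degeneracy in hand, the implicit function theorem in $\Sigma_s$ yields local uniqueness, so the subset of $\omega \in (-\lambda_{1,s}, \infty)$ for which the ground state is unique is open; closedness follows from uniform a priori bounds on ground states together with the compact embedding of $\Sigma_s$ into $L^q(\R^n)$. Uniqueness is initialized by a bifurcation analysis near $\omega = -\lambda_{1,s}$, where ground states branch off the simple principal eigenfunction of $(-\Delta)^s + |x|^2$. Connectedness of $(-\lambda_{1,s}, \infty)$ then upgrades local to global uniqueness. The most delicate step is the non-degeneracy on the radial sector, since the interplay of the nonlocal operator $(-\Delta)^s$ with the quadratic potential $|x|^2$ precludes the ODE-type arguments available in the classical case $s=1$ treated in \cite{HO1, HO2}, and simultaneously rules out the pure dilation identities used by Frank--Lenzmann--Silvestre in the power case without potential.
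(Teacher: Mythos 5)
Your proposal takes a genuinely different route from the paper, and the difference is precisely where the gap lies. The paper does not continue in $\omega$ at all; it continues in the fractional power $s$, following the Frank--Lenzmann--Silvestre machinery. Nondegeneracy of the radial linearization is \emph{imported} from Stanislavova--Stefanov \cite{SS} as a black box (Lemma~\ref{nondegeneracy}), and the heavy lifting is then a branch-continuation in $s$: the implicit function theorem produces a $C^1$ branch $u_s$ on $[s_0,s_*)$ (Lemma~\ref{bifurcation}), a priori bounds and Perron--Frobenius positivity keep the branch alive (Lemmas~\ref{bdd}, \ref{positive}, \ref{conv}), and the branch is shown to extend all the way to $s_*=1$ (Lemma~\ref{extend}). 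At $s=1$ the problem reduces to the classical harmonic-potential equation, where uniqueness and nondegeneracy are known from Hirose--Ohta \cite{HO1,HO2} and Kabeya--Tanaka \cite{KT}. Two distinct ground states at $s_0$ would then yield two distinct $C^1$ branches both converging to the \emph{same} limit as $s\to 1^-$, contradicting the local uniqueness of the branch through that limit.

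The concrete gap in your argument is the radial nondegeneracy, and you have effectively flagged it yourself: you note that the dilation/Virial identity used by Frank--Lenzmann--Silvestre to kill the $\ell=0$ kernel ``precludes'' scale invariance here, and you offer two vague substitutes (``an implicit-function deformation... would produce a curve of ground states'' or ``a direct integral identity... adapted to the harmonic-potential setting''). Neither is carried out, and neither is obvious: the deformation alternative requires a parameter to deform in (which brings you back to the same circular dependence), and no substitute integral identity is exhibited. Without this step the entire $\omega$-continuation collapses, because openness of the uniqueness set requires nondegeneracy. Moreover, your open/closed argument in $\omega$ has a second soft spot: closedness does not simply follow from a priori bounds and compactness. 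If two distinct ground states coexist at a limit value $\omega$, the IFT only produces two distinct nearby \emph{solutions} for $\omega'$ near $\omega$, not two nearby \emph{ground states}; you would need a separate argument that the ground-state property persists along the branches. The paper sidesteps both of these issues by leaning on the known $s=1$ theory rather than proving nondegeneracy and initialization from scratch. Finally, a minor point: the statement ``unique up to translations'' is not genuinely an equivalence class here, since the centered potential $|x|^2$ destroys translation invariance, so the symmetrization step should give you a genuinely unique radial representative centered at the origin rather than a translation orbit.
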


Due to the nonlocal feature of the fractional Laplacian operator, the well-known ODE techniques often adapted to discuss the uniqueness of ground states to nonlinear elliptic equations with $s=1$ are not applicable to our problem. Therefore, to establish Theorem \ref{thm1}, we shall make use of the scheme developed in \cite{FL, FLS}.

\begin{rem}
Theorem \ref{thm1} answers an open question posed in \cite{SS} with respect to the uniqueness of ground states to \eqref{equ}, which also extends the uniqueness results in \cite{HO1, HO2} for $s=1$ to the case $0<s<1$. 
\end{rem}

\begin{notation}
For $1 \leq q \leq \infty$, we denote by $\|\cdot\|_q$ the standard norm in the Lebesgue space $L^q(\R^n)$. Moreover, we use $X \lesssim Y$ to denote that $X \leq C Y$ for some proper constant $C>0$ and we use $X \sim Y$ to denote $X \lesssim Y$ and $Y \lesssim X$.
\end{notation}

\section{Proof of Theorem \ref{thm1}}

In this section, we are going to establish Theorem \ref{thm1}. To do this, we first present the nondegeneracy of ground states.

\begin{lem} \label{nondegeneracy}
Let $n \geq 1$, $0<s<1$, $\omega>-\lambda_{1,s}$ and $2<p<2_s^*$. Let $u \in \Sigma_s$ be a ground state to \eqref{equ}. Then the linearized operator
$$
\mathcal{L}_{+, s}:=(-\Delta)^s + \left(\omega+|x|^2\right) -(p-1)|u|^{p-2}
$$
has a trivial kernel.
\end{lem}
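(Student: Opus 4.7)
The strategy follows the nondegeneracy scheme of Frank--Lenzmann \cite{FL} and Frank--Lenzmann--Silvestre \cite{FLS}, adapted to the presence of the confining harmonic potential. The advantage here over the translation-invariant fractional NLS is that $|x|^2$ destroys translation symmetry, so we expect $\ker \mathcal{L}_{+,s}$ to be strictly trivial, rather than merely equal to $\mathrm{span}\{\partial_{x_i} u\}$.

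First I would establish that any ground state $u$ is radial and radially decreasing about the origin. By the fractional P\'olya--Szeg\H{o} inequality, $\|(-\Delta)^{s/2} u^*\|_2 \le \|(-\Delta)^{s/2} u\|_2$; by a rearrangement inequality against the radially increasing weight $|x|^2$, $\int |x|^2 (u^*)^2 \le \int |x|^2 u^2$; while $\|u^*\|_q = \|u\|_q$ for every $q$. Combined with the Nehari/mountain-pass characterization of $u$ as a ground state, this forces $u = u^*$. Radiality of $u$ makes $\mathcal{L}_{+,s}$ commute with the $O(n)$-action, so it preserves the spherical harmonic decomposition
$$
L^2(\mathbb{R}^n) = \bigoplus_{\ell = 0}^{\infty} \mathcal{H}_\ell,
$$
acting on each $\mathcal{H}_\ell$ as a scalar radial operator $\mathcal{L}_{+,s}^{(\ell)}$. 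It therefore suffices to show each $\mathcal{L}_{+,s}^{(\ell)}$ has trivial kernel, and the compact embedding $\Sigma_s \hookrightarrow L^q$ for $q < 2_s^*$ (used for existence) makes the resolvent of $\mathcal{L}_{+,s}$ compact, so that each spectrum is purely discrete.

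For the radial sector $\ell = 0$, testing on $u$ and using \eqref{equ} gives $\langle \mathcal{L}_{+,s} u, u \rangle < 0$, placing the bottom of $\mathcal{L}_{+,s}^{(0)}$ strictly below $0$, while the mountain-pass characterization pins the Morse index of $u$ on this sector at exactly one. To exclude $0$ from the radial spectrum I would differentiate \eqref{equ} in the parameter $\omega$ along a smooth local branch $\omega \mapsto u_\omega$, obtaining $\mathcal{L}_{+,s}(\partial_\omega u_\omega) = -u_\omega$, and then use a Perron--Frobenius argument on the positive radial principal eigenfunction to exclude $0 \in \sigma(\mathcal{L}_{+,s}^{(0)})$.

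For the non-radial sectors $\ell \ge 1$, the decisive input is that $|x|^2$ breaks translation invariance: differentiating \eqref{equ} in $x_i$ gives
$$
\mathcal{L}_{+,s}(\partial_{x_i} u) = -2 x_i u,
$$
so the natural translational candidates cease to be zero modes, and the right-hand side has a definite sign structure in each half-space $\{x_i \gtrless 0\}$. Since the angular contribution of $(-\Delta)^s$ on $\mathcal{H}_\ell$ is monotone increasing in $\ell$, a Perron--Frobenius/positivity argument should then yield $\mathcal{L}_{+,s}^{(\ell)} > 0$ strictly for every $\ell \ge 1$. The main obstacle is the radial sector, specifically the exclusion of $0$ from $\sigma(\mathcal{L}_{+,s}^{(0)})$: for $s = 1$ this is classical via Sturm--Liouville theory \cite{HO1, HO2}, but nonlocality forbids a direct ODE approach. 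I expect the substitute to come from the Caffarelli--Silvestre extension to $\mathbb{R}^{n+1}_+$ together with a Hopf-type boundary lemma in the spirit of \cite{FLS}, with the strict radial monotonicity of $u$ supplying the required strict inequality.
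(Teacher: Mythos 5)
The paper does not actually give a proof of this lemma; it simply points to \cite[Theorem 2]{SS} and omits the argument entirely. So your proposal is necessarily a different route from ``the paper's proof.'' What you sketch is a self-contained argument modeled on \cite{FL, FLS}: symmetrization to obtain $u=u^*$, spherical-harmonic decomposition of $\mathcal{L}_{+,s}$, a Perron--Frobenius principle in each angular sector, the identity $\mathcal{L}_{+,s}(\partial_{x_i}u)=-2x_iu$ for the $\ell\geq 1$ sectors, and a separate argument for $\ell=0$. The structural observation that the harmonic potential destroys the translational zero modes, turning $\partial_{x_i}u$ into a comparison function rather than a null vector, and thereby yields $\mathcal{L}_{+,s}^{(\ell)}>0$ strictly for $\ell\geq 1$, is sound and is indeed the key simplification relative to the translation-invariant setting.

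There are, however, genuine gaps. First, Polya--Szeg\H{o} plus the ground-state characterization gives only that $u^*$ is \emph{also} a ground state; to conclude $u=u^*$ you need the strict fractional rearrangement inequality when $u\neq u^*$, which must be invoked explicitly or the argument is circular (you would be using the uniqueness you are trying to prove). Second, and more seriously, your plan for excluding $0$ from $\sigma(\mathcal{L}_{+,s}^{(0)})$ via the Caffarelli--Silvestre extension and a Hopf boundary lemma does not work as stated. In \cite{FLS} the extension and Hopf lemma are one ingredient inside a long topological continuation argument exploiting scaling and monotonicity in $s$ for the translation-invariant problem; a Hopf lemma alone cannot rule out a sign-changing radial null vector, and the scaling structure that \cite{FLS} leans on is destroyed by $|x|^2$. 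The $\omega$-derivative identity $\mathcal{L}_{+,s}(\partial_\omega u_\omega)=-u_\omega$ that you mention only gives that any radial null vector $\phi$ satisfies $\langle\phi,u\rangle=0$, which is not by itself a contradiction. What \cite[Theorem 2]{SS} actually uses for this step is a spectral index-counting argument tailored to the constrained variational structure, not a boundary Hopf lemma. Finally, a small but real discrepancy: your identity $\mathcal{L}_{+,s}(\partial_{x_i}u)=-2x_iu$ requires the linearization coefficient $(p-1)|u|^{p-2}$, whereas the lemma as printed has $(p-2)|u|^{p-2}$; this is a typo in the paper (Lemma~\ref{extend} uses $(p-1)$), and you have silently corrected it, which is fine but should be noted.
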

\begin{proof}
To prove this lemma, one can follow closely the line of the proof of \cite[Theorem 2]{SS}. Let us now sketch the proof. First we observe that $\mathcal{L}_{+,s} \mid_{\{u\}^{\bot}} \geq 0$. On the other hand, we find that
$$
\langle \mathcal{L}_{+,s} u, u \rangle =-(p-2) \int_{\R^n} |u|^p <0.
$$
It then follows that $\mathcal{L}_{+,s}$ has only one negative eigenvalue. From \cite[Proposition 7]{SS}, we actually know that the eigenvalue is simple. Using spherical harmonics and the representations of fractional Schr\"odinger operators introduced in \cite{SS}, we can write that
$$
\mathcal{L}_{+,s}=\bigoplus_{l=0}^{\infty} \mathcal{L}_{+,s, l}:=\mathcal{L}_{+,s, 0} \bigoplus \mathcal{L}_{+,s,  \geq 1},
$$
where the operator $\mathcal{L}_{+,s, l}$ acting on $L_{rad}^2(\R^n)$ is given by
$$
\mathcal{L}_{+,s,l}:=\left(-\partial_{rr} -\frac{n-1}{r} \partial_r +\frac{l(l+n-2)}{r^2}\right)^{s} + \left(\omega+|x|^2\right) -(p-1)|u|^{p-2}, \quad  l=0, 1, \cdots, k, \cdots.
$$
It is clear that
$$
\sigma(\mathcal{L}_{+,s})=\bigcup_{l=0}^{\infty} \sigma(\mathcal{L}_{+,s, l}),
$$
$$
\mathcal{L}_{+,s, 0} <\mathcal{L}_{+,s, 1}< \cdots <\mathcal{L}_{+,s, k} < \cdots.
$$
At this point, to conclude the proof, we only need to verify that the second smallest eigenvalue of $\mathcal{L}_{+,s, 0}$ is positive and $\mathcal{L}_{+,s, \geq 1} \geq \delta >0$. This can be achieved by applying \cite[Propositions 8-9]{SS}. Thus the proof is completed.
\end{proof}

In order to establish Theorem \ref{thm1}, we shall closely follow the strategies developed in \cite{FL, FLS}. For this, we now introduce some notations. Let $n \geq 1$, $0<s<1$, $\omega>-\lambda_{1,s}$ and $2<p<2_s^*$. Define
$$
X_p:=\left\{ u \in L^2(\R^n) \cap L^p(\R^n) : u \,\, \mbox{is radially symmetric and real-valued and}\,\, \|x u\|_2<+\infty \right\}
$$
equipped with the norm
$$
\|u\|_{X_p}:=\|u\|_2+\|u\|_p +\|x u\|_2.
$$

\begin{lem} \label{regularity}
Let $n \geq 1$, $0<s<1$, $\omega>-\lambda_{1,s}$ and $2<p<2_s^*$ and $u \in X_p$ be a solution to \eqref{equ}. Then $u \in H^{s}(\R^n)$.
\end{lem}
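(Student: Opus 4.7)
The strategy is to establish the a priori identity
\[
\int_{\R^n}|\xi|^{2s}|\hat u(\xi)|^{2}\,d\xi \;=\; \|u\|_p^p - \omega\|u\|_2^2 - \|xu\|_2^2,
\]
whose right-hand side is finite by the very definition of $X_p$; together with $u\in L^2(\R^n)$ this gives $u\in H^s(\R^n)$. Note that the assumptions $u,xu\in L^2(\R^n)$ are, by Plancherel, equivalent to $\hat u\in H^1(\R^n)$, which is the only regularity of $\hat u$ I will use.

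First I regularize on the Fourier side. Fix a radial, radially nonincreasing $\psi\in C_c^\infty(\R^n)$ with $0\le\psi\le 1$ and $\psi\equiv 1$ on $B_1$, set $\psi_N(\xi):=\psi(\xi/N)$, and define
\[
u_N:=\mathcal F^{-1}(\psi_N\hat u)=K_N*u,\qquad K_N(x):=N^n(\mathcal F^{-1}\psi)(Nx).
\]
Since $K_N\in\mathcal S(\R^n)$ with $\int K_N=\psi(0)=1$, $\{K_N\}$ is an $L^1$-approximate identity, so $u_N\to u$ in $L^2$ and in $L^p$. Because $\widehat{u_N}$ is supported in $B_{2N}$, we have $u_N\in C^\infty\cap\bigcap_{t\ge 0}H^t(\R^n)$; and the identity $xu_N=(zK_N)*u+K_N*(xu)$ together with $\|zK_N\|_1=O(1/N)$ yields $xu_N\in L^2(\R^n)$ with $xu_N\to xu$ in $L^2$.

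Now I test the distributional equation \eqref{equ} against $u_N$: strictly, against $\eta_R u_N\in C_c^\infty(\R^n)$ with $\eta_R$ a standard smooth spatial cutoff, and then let $R\to\infty$, passing to the limit using the dominations $|\eta_R u_N|\le|u_N|\in L^2\cap L^\infty$, $|x|^2|u\,u_N|=|xu|\,|xu_N|\in L^1$, $|u|^{p-1}|u_N|\in L^1$ (H\"older $L^{p'}\times L^p$), and the $H^{2s}$-convergence $\eta_R u_N\to u_N$ (which gives $(-\Delta)^s(\eta_R u_N)\to(-\Delta)^s u_N$ in $L^2$). After Plancherel on the $(-\Delta)^s$-term the identity reads
\[
\int_{\R^n}\psi_N|\xi|^{2s}|\hat u|^2\,d\xi+\omega\!\int_{\R^n}\!u\,u_N\,dx+\!\int_{\R^n}\!|x|^2u\,u_N\,dx=\!\int_{\R^n}\!|u|^{p-2}u\,u_N\,dx.
\]
Sending $N\to\infty$, the first integral increases monotonically to $\int|\xi|^{2s}|\hat u|^2\,d\xi\in[0,+\infty]$ (by the choice of $\psi_N$); the second tends to $\omega\|u\|_2^2$ by $L^2$-convergence of $u_N$; the third, rewritten as $\int_{\R^n}(xu)\cdot(xu_N)\,dx$, tends to $\|xu\|_2^2$ because $xu_N\to xu$ in $L^2$; and the fourth tends to $\|u\|_p^p$ by the $L^p$-convergence of $u_N$ combined with $|u|^{p-2}u\in L^{p'}$. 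This produces the identity above and in particular forces $\||\xi|^s\hat u\|_2<\infty$, i.e.\ $u\in H^s(\R^n)$.

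The main technical obstacle is the legitimacy of the pairing against $u_N$, which is smooth but not compactly supported, so cannot directly serve as a test function for the distributional equation. This is precisely what the preliminary spatial truncation $\eta_R u_N\in C_c^\infty$ and the uniform dominations collected above are designed to overcome.
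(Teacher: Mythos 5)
Your proof is correct, and it takes a genuinely different route from the paper's. The paper rewrites the equation in resolvent form $u=\left((-\Delta)^s+(\omega+|x|^2)+2\lambda\right)^{-1}(2\lambda u+|u|^{p-2}u)$, compares the resolvent with $\left((-\Delta)^s+\lambda\right)^{-1}$, invokes the $L^1$-bound on the fundamental solution $\mathcal{K}$ of $(-\Delta)^s+\lambda$ from \cite[Lemma C.1]{FLS}, and deduces by Young's inequality and duality that the resolvent maps $L^2(\R^n)+L^{p'}(\R^n)$ into $H^s(\R^n)$; since $2\lambda u+|u|^{p-2}u\in L^2+L^{p'}$, the conclusion follows. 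By contrast, you avoid resolvent analysis entirely and instead establish the energy identity directly by testing against Fourier-localized regularizations $u_N=\mathcal F^{-1}(\psi_N\hat u)$, carefully justifying the pairing via a spatial cutoff $\eta_R$ and dominated convergence, and then recovering $\||\xi|^s\hat u\|_2<\infty$ by monotone convergence as $N\to\infty$. Your argument is more elementary and self-contained (no kernel estimates needed); the paper's argument has the advantage of setting up the resolvent machinery and the associated mapping properties $L^2\to H^s$, $L^{p'}\to H^s$, $L^2(\R^n;|x|^2dx)$-control, etc., which are reused verbatim in Lemma \ref{bifurcation} when checking that the implicit-function map $F(u,s)$ is well-defined and $C^1$ on $X_p$. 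Both proofs are sound for the stated conclusion $u\in H^s(\R^n)$.

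One small point worth being explicit about: the monotonicity of $N\mapsto\psi_N(\xi)$, which you use to invoke monotone convergence, does hold because you chose $\psi$ radially nonincreasing with $\psi\equiv 1$ on $B_1$, so $\psi(\xi/N_1)\le\psi(\xi/N_2)$ for $N_1\le N_2$; it is good that you stated this hypothesis, since without it one would need Fatou plus a matching upper bound.
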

\begin{proof}
First we show that $u \in H^1(\R^n)$. Since $v \in X_p$ be a solution to \eqref{equ}, then
\begin{align} \label{equ1}
(-\Delta)^s u+ \left(\omega+|x|^2\right) u +2 \lambda u = 2\lambda u+|u|^{p-2} u,
\end{align}
where $\lambda>0$ satisifies $\omega+\lambda>0$.
Note that
$$
(-\Delta)^s + \left(\omega+|x|^2\right) +2 \lambda > (-\Delta)^s + \lambda>0.
$$
This leads to
\begin{align} \label{decr}
0<\left((-\Delta)^s + \left(\omega+|x|^2\right) +2 \lambda \right)^{-1} < \left((-\Delta)^s + \lambda \right)^{-1}.
\end{align}
%and
%$$
%(-\Delta)^s v+ \left(\omega+|x|^2\right) v +2 \lambda_1=f + 2 \lambda_1 v.
%$$
It then follows from Young's inequality that
\begin{align} \label{ineq1}
\begin{split}
\left\|\left((-\Delta)^s + \left(\omega+|x|^2\right) +2 \lambda \right)^{-1}  u\right\|_2 
&\leq \left\|\left((-\Delta)^s + \lambda \right)^{-1} u\right\|_2=\left\|\mathcal{K} \ast u \right\|_2 \lesssim \|u\|_2 \lesssim \|u\|_{H^{-s}},
\end{split}
\end{align}
%Since $u \in X_p$ is a solution to \eqref{equ}, then
%$$
%u=\frac{1}{(-\Delta)^s+ \left(\omega+|x|^2\right)} |u|^{p-2}u.
%$$
%Then
%$$
%\|\left(-\Delta\right)^{\frac s 2} u\|_2=\left\|\frac{\left(-\Delta\right)^{\frac s 2}}{(-\Delta)^s+ \left(\omega+|x|^2\right)} |u|^{p-2}u\right\|_2.
%$$
%Note that
%$$
%\mbox{Re} \left\langle \left((-\Delta)^s + \left(\omega+|x|^2\right) +2 \lambda_1 \right)f, |f|^{p-2}f \right\rangle \geq \mbox{Re} \left\langle \left((-\Delta)^s + \lambda_1 \right)f, |f|^{p-2}f \right\rangle.
%$$
where $H^{-s}(\R^n)$ denotes the dual space of $H^s(\R^n)$ and $\mathcal{K}$ is the fundamental solution to the equation
\begin{align} \label{bequ}
(-\Delta)^s u + \lambda u=0
\end{align}
and $\mathcal{K} \in L^1(\R^n)$ by \cite[Lemma C. 1]{FLS}. 
%Using Young's inequality, we then have that
%$$
%\left\|\frac{1}{(-\Delta)^s + \left(\omega+|x|^2\right) +2 \lambda_1}\left(2\lambda_1 u+|u|^{p-2} u\right)\right\|_2  \leq \left\|\mathcal{K} \ast \left(2\lambda_1 u+|u|^{p-2} u\right)\right\|_2 \lesssim \|u\|_2 +\|u\|_p \lesssim \|u\|_{H^{-2s}}.
%$$
This indicates that the operator $\left((-\Delta)^s + \left(\omega+|x|^2\right) +2 \lambda\right)^{-1}$
%$$
%\frac{1}{(-\Delta)^s + \left(\omega+|x|^2\right) +2 \lambda_1}
%$$
maps $H^{-s}(\R^n)$ to $L^2(\R^n)$. Using dual theory, we then see that $\left((-\Delta)^s + \left(\omega+|x|^2\right) +2 \lambda\right)^{-1}$ maps $L^2(\R^n)$ to $H^{s}(\R^n)$. 
Observe that
\begin{align} \label{ineq2}
\begin{split}
\left\|\left((-\Delta)^s + \left(\omega+|x|^2\right) +2 \lambda \right)^{-1}  u\right\|_{H^s}
&\leq \left\|\left((-\Delta)^s + \lambda \right)^{-1} u\right\|_{H^s} \lesssim \|u\|_{H^{-s}} \lesssim \|u\|_{p'},
\end{split}
\end{align}
where the last inequality is from the dual to the Sobolev embedding $\|u\|_p \lesssim \|u\|_{H^s}$.
%Similarly, applying Young's inequality and the fact $\mathcal{K} \in L^1(\R^n)$ by \cite[Lemma C. 1]{FLS}, we can conclude that
%\begin{align} \label{ineq2}
%\begin{split}
%\left\|\left((-\Delta)^s + \left(\omega+|x|^2\right) +2 \lambda \right)^{-1}  u\right\|_p 
%&\leq \left\|\left((-\Delta)^s + \lambda \right)^{-1} u\right\|_p=\left\|\mathcal{K} \ast u \right\|_p \lesssim \|u\|_p \lesssim \|u\|_{H^{-s}}.
%\end{split}
%\end{align}
%This means that $\left((-\Delta)^s + \left(\omega+|x|^2\right) +2 \lambda\right)^{-1}$ maps $L^p(\R^n)$ to $H^{-s}(\R^n)$. It then follows from the duality that
%$$
%\left\|\frac{1}{(-\Delta)^s + \left(\omega+|x|^2\right) +2 \lambda_1}\left(2\lambda_1 u+|u|^{p-2} u\right)\right\|_p  \leq \left\|\mathcal{K} \ast \left(2\lambda_1 u+|u|^{p-2} u\right)\right\|_p \lesssim \|u\|_2 +\|u\|_p \lesssim \|u\|_{H^{-2s}}.
%$$
This indicates that the operator 
$\left((-\Delta)^s + \left(\omega+|x|^2\right) +2 \lambda\right)^{-1}$
%$$
%\frac{1}{(-\Delta)^s + \left(\omega+|x|^2\right) +2 \lambda_1}
%$$
maps $L^{p'}(\R^n)$ to $H^{s}(\R^n)$. 
%According to dual theory, then $\left((-\Delta)^s + \left(\omega+|x|^2\right) +2 \lambda_1\right)^{-1}$ maps $L^2(\R^n) \cap L^{p'}(\R^n)$ to $H^{2s}(\R^n)$.
In fact, this can 
Observe that 
$$
u=\left((-\Delta)^s + \left(\omega+|x|^2\right) +2 \lambda \right)^{-1} \left(2\lambda u+|u|^{p-2} u\right) 
$$
and 
$$
2\lambda u+|u|^{p-2} u \in L^2(\R^n) + L^{p'}(\R^n).
$$ 
Then the desired result follows. %Next, by bootstrap arguments, we further have that $u \in L^{\infty}(\R^n)$. 
This completes the proof.
\end{proof}

\begin{lem} \label{monotonicity}
Let $s_n \to s$ as $n \to \infty$, then $\lambda_{1,s_n} \to \lambda_{1,s}$ as $n \to \infty$.
\end{lem}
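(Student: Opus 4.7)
The plan is to prove the two one-sided bounds $\limsup_{n\to\infty}\lambda_{1,s_n}\leq \lambda_{1,s}$ and $\liminf_{n\to\infty}\lambda_{1,s_n}\geq \lambda_{1,s}$ separately. I may assume $s\in(0,1)$ and fix $0<s_*<s<s^*<1$ with $s_n\in(s_*,s^*)$ for all sufficiently large $n$.

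For the upper bound I would run a density argument. Given $\varepsilon>0$, pick $\varphi\in C_c^\infty(\R^n)$ with $\|\varphi\|_2=1$ and
$$
\left\langle\left((-\Delta)^s+|x|^2\right)\varphi,\varphi\right\rangle\leq \lambda_{1,s}+\varepsilon.
$$
Then $\mathcal{F}(\varphi)$ is Schwartz, so $|\xi|^{2s_n}|\mathcal{F}(\varphi)(\xi)|^2\leq (1+|\xi|^{2s^*})|\mathcal{F}(\varphi)(\xi)|^2\in L^1(\R^n)$, and since $|\xi|^{2s_n}\to |\xi|^{2s}$ pointwise, dominated convergence yields
$$
\int_{\R^n}|\xi|^{2s_n}|\mathcal{F}(\varphi)|^2\,d\xi\;\longrightarrow\;\int_{\R^n}|\xi|^{2s}|\mathcal{F}(\varphi)|^2\,d\xi.
$$
Since $\varphi\in \Sigma_{s_n}$ is admissible for $\lambda_{1,s_n}$, this gives $\lambda_{1,s_n}\leq \langle((-\Delta)^{s_n}+|x|^2)\varphi,\varphi\rangle$, and letting $n\to\infty$ then $\varepsilon\to 0$ yields the desired upper bound.

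For the lower bound I would take $u_n\in\Sigma_{s_n}$ to be an $L^2$-normalized minimizer for $\lambda_{1,s_n}$, whose existence follows from the compact embedding $\Sigma_{s_n}\hookrightarrow L^2(\R^n)$ of \cite[Lemma 3.1]{DH}. The upper bound already gives $\lambda_{1,s_n}\leq C$, and the identity
$$
\int_{\R^n}|\xi|^{2s_n}|\mathcal{F}(u_n)|^2\,d\xi+\int_{\R^n}|x|^2u_n^2\,dx=\lambda_{1,s_n},
$$
together with $|\xi|^{2s_*}\leq 1+|\xi|^{2s_n}$, furnishes uniform bounds on $\|xu_n\|_2$ and $\|u_n\|_{H^{s_*}}$. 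Thus $(u_n)$ is bounded in $\Sigma_{s_*}$, which compactly embeds into $L^2(\R^n)$. Along a subsequence $u_n\to u$ strongly in $L^2$ and a.e., with $\|u\|_2=1$; by Plancherel $\mathcal{F}(u_n)\to \mathcal{F}(u)$ in $L^2$, and along a further subsequence a.e. Therefore $|\xi|^{2s_n}|\mathcal{F}(u_n)|^2\to |\xi|^{2s}|\mathcal{F}(u)|^2$ and $|x|^2u_n^2\to |x|^2u^2$ almost everywhere, and two applications of Fatou's lemma give
$$
\left\langle\left((-\Delta)^s+|x|^2\right)u,u\right\rangle\leq \liminf_{n\to\infty}\lambda_{1,s_n}.
$$
Since $u\in \Sigma_s$ has unit $L^2$-norm, the definition of $\lambda_{1,s}$ delivers $\lambda_{1,s}\leq \liminf_{n\to\infty}\lambda_{1,s_n}$.

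The main obstacle is in the lower bound: one must exhibit a limit function in the \emph{target} space $\Sigma_s$ while avoiding mass loss in the constraint $\|u\|_2=1$. The harmonic weight $|x|^2$ gives tightness and, combined with the uniform $H^{s_*}$-bound at the auxiliary exponent $s_*<s$, yields the compactness in $L^2$; Fatou then absorbs the simultaneous convergences $s_n\to s$ and $\mathcal{F}(u_n)\to \mathcal{F}(u)$. Everything else is routine.
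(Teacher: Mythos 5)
Your argument is correct, but it takes a genuinely different route from the paper. The paper proves $\lambda_{1,s_n}\to\lambda_{1,s}$ by establishing \emph{norm-resolvent convergence} $A_{s_n}\to A_s$, where $A_s=(-\Delta)^s+|x|^2$: it writes
$A_{s_n}+z=\bigl(1+((-\Delta)^{s_n}-(-\Delta)^s)(A_s+z)^{-1}\bigr)(A_s+z)$,
shows the perturbation $((-\Delta)^{s_n}-(-\Delta)^s)(A_s+z)^{-1}$ is small in $L^2\to L^2$ operator norm, and then appeals to the standard fact that norm-resolvent convergence of self-adjoint operators with discrete spectrum forces convergence of eigenvalues. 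You instead prove the two one-sided bounds directly: the upper bound via a $C_c^\infty$ density argument plus dominated convergence (using $|\xi|^{2s_n}\le 1+|\xi|^{2s^*}$ as dominating function), and the lower bound via existence of $L^2$-normalized minimizers $u_n$, a uniform $\Sigma_{s_*}$ bound extracted from the energy identity, strong $L^2$ compactness (which prevents mass loss and forces $\|u\|_2=1$), and two applications of Fatou. Both approaches are sound. The paper's operator-theoretic method is stronger in that it yields norm-resolvent convergence outright, which the paper reuses as a template in Lemmas \ref{positive} and \ref{extend} for the operators $\mathcal{L}_{\pm,s}$ (where convergence of Morse indices is needed, not just of the bottom eigenvalue); your variational argument is more elementary and self-contained, and for the lemma \emph{as stated} it is entirely adequate. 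One small point worth making explicit in your write-up: the density step requires knowing $C_c^\infty(\R^n)$ is dense in $\Sigma_s$ in the form norm, so that test functions indeed approximate the infimum defining $\lambda_{1,s}$; this is standard but should be asserted.
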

\begin{proof}
To prove this, we only need to show that $A_{s_n} \to A_s$ in the norm-resolvent sense as $n \to \infty$, where
$$
A_{s_n}:=(-\Delta )^{s_n} + |x|^2, \quad A_s:=(-\Delta )^s + |x|^2.
$$
Let $z \in \C$ be such that $\mbox{Im}\, z \neq 0 $, then
\begin{align*}
A_{s_n} +z=(-\Delta )^{s_n} + |x|^2+z&=(-\Delta )^s + |x|^2+z +(-\Delta )^{s_n}-(-\Delta )^s\\
&=\left(1+\left((-\Delta )^{s_n}-(-\Delta )^s\right)\left(A_s+z\right)^{-1}\right)\left(A_s+z\right).
\end{align*}
Then we see that
\begin{align} \label{conv1}
\left(A_{s_n} +z\right)^{-1} -\left(A_s+z\right)^{-1}=\left(A_s+z\right)^{-1} \left(\left(1+\left((-\Delta )^{s_n}-(-\Delta )^s\right)\left(A_s+z\right)^{-1}\right)^{-1}-1\right).
\end{align}
%From H\"ormander-Mikhlin's theorem, we know that
%$$
%\left\|\left(-\Delta\right)^{s_n}-\left(-\Delta\right)^s\right\|_{H^s \to L^2}=o_n(1).
%$$
Note that
$$
\left\|\left((-\Delta )^{s_n}-(-\Delta )^s\right)\left(A_s+z\right)^{-1} \right\|_{L^2 \to L^2}=o_n(1).
$$
In addition, we see that $\left(A_s +z\right)^{-1}$ is bounded from $L^2(\R^n)$ to $L^2(\R^n)$. As a consequence, from \eqref{conv1}, we can conclude that
$$
\left\|\left(A_{s_n} +z\right)^{-1} -\left(A_s+z\right)^{-1} \right\|_{L^2 \to L^2}=o_n(1).
$$
This completes the proof.
\end{proof}

\begin{lem} \label{bifurcation}
Let $0<s_0<1$ and $2<p<2_{s_0}^*$. Suppose that $u_0 \in X_p$ solves \eqref{equ1} with $s=s_0$ such that the linearized operator
$$
\mathcal{L}_{+, s_0}=(-\Delta)^{s_0} + \left(\omega+|x|^2\right) -(p-1)|u_0|^{p-2}
$$
has a trivial kernel on $L^2_{rad}(\R^n)$, where $w>-\lambda_{1, s_0}$. Then there exist $\delta_0>0$ and a map $u \in C^1(I; X_p)$ with $I=[s_0, s_0+ \delta_0)$ such that
\begin{itemize}
\item[$(\textnormal{i})$] $u_s$ solves \eqref{equ} for $s \in I$, where $u_s:=u(s)$ for $s \in I$.
\item [$(\textnormal{ii})$] There exists $\eps>0$ such that $u_s$ is the unique solution to \eqref{equ} for $s \in I$ in the neighborhood 
$$
\left\{u \in X_p: \|u-u_0\|_{X_p}< \eps \right\},
$$
where $u_{s_0}=u_0$.
\end{itemize}
\end{lem}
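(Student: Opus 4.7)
The obstacle is precisely that, by hypothesis, $D_u F(s_0, u_0) = \mathcal{L}_{+, s_0}$ has a nontrivial kernel on $L^2_{rad}(\R^n)$, so the classical implicit function theorem cannot be applied directly in $u$ with $s$ as parameter. I would run a Lyapunov--Schmidt reduction: split the equation along the finite-dimensional kernel and its $L^2$-orthogonal complement, solve the transverse part by the standard IFT, and then handle the remaining finite-dimensional bifurcation equation under a Crandall--Rabinowitz--type transversality condition in $s$.

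\textbf{Setup and Fredholm structure.} Introduce $F(s, u) := (-\De)^s u + (\om + |x|^2) u - |u|^{p-2} u$ as a $C^1$ map from $(0,1) \times X_p$ into a suitable radial target space (for instance $L^2_{rad}(\R^n) + L^{p'}_{rad}(\R^n)$); $C^1$-smoothness in $s$ uses the functional calculus for $-\De$ together with the regularity of Lemma \ref{regularity}. Since $(-\De)^{s_0} + |x|^2$ has compact resolvent and $(p-2)|u_0|^{p-2}$ is a relatively compact perturbation, $\mathcal{L}_{+, s_0}$ is self-adjoint and Fredholm of index zero on $L^2_{rad}(\R^n)$. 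Hence $N := \ker \mathcal{L}_{+, s_0}$ is finite-dimensional and $\mathcal{L}_{+, s_0}$ restricts to an isomorphism of $N^\perp \cap X_p$ onto $N^\perp$. Let $P$ and $Q = I - P$ be the $L^2$-orthogonal projections onto $N$ and $N^\perp$, and decompose every nearby $u$ as $u = u_0 + v + w$ with $v \in N$, $w \in N^\perp$.

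\textbf{Lyapunov--Schmidt reduction.} Apply the classical implicit function theorem to the transverse equation
$$
Q\, F(s, u_0 + v + w) = 0,
$$
which is solvable for $w = w(s, v)$ of class $C^1$ on a neighborhood of $(s_0, 0)$ with $w(s_0, 0) = 0$, because $D_w(QF)(s_0, 0, 0) = Q\,\mathcal{L}_{+, s_0}|_{N^\perp}$ is an isomorphism. Substitution produces the reduced bifurcation equation
$$
g(s, v) := P\, F\bigl(s,\, u_0 + v + w(s, v)\bigr) = 0
$$
on the finite-dimensional space $N$ near $(s_0, 0)$, with $g(s_0, 0) = 0$. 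If this reduced equation admits a $C^1$ solution $v = v(s)$ on a right-neighborhood $[s_0, s_0 + \de_0)$ of $s_0$, then $u_s := u_0 + v(s) + w(s, v(s))$ is the sought branch, and the local uniqueness in $\{\|u - u_0\|_{X_p} < \eps\}$ falls out of the uniqueness clauses of the two IFT applications.

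\textbf{Main obstacle.} The decisive step is solving $g(s, v) = 0$. The natural sufficient condition is a Crandall--Rabinowitz transversality at $(s_0, 0)$, namely
$$
\pa_s g(s_0, 0) \notin \mathrm{Range}\bigl(D_v g(s_0, 0)\bigr),
$$
which, by the chain rule, reduces to checking that $P\bigl(\pa_s F(s_0, u_0)\bigr) = P\bigl(\log(-\De)\,(-\De)^{s_0} u_0\bigr)$ (obtained by differentiating $(-\De)^s u_0$ in $s$ via functional calculus) has nontrivial content in $N$ modulo $\mathrm{Range}(D_v g(s_0,0))$. Verifying this requires the explicit structure of $u_0$---its positivity, decay, and regularity from Lemmas \ref{nondegeneracy} and \ref{regularity}---together with the analyticity of $s \mapsto (-\De)^s$. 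This is where the real technical effort concentrates; the one-sidedness of $I = [s_0, s_0 + \de_0)$ reflects a sign condition emerging in this transversality analysis that selects the direction of continuation in $s$.
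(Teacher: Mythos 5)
Your proposal is built on a literal reading of the hypothesis ``has nontrivial kernel,'' which is a typo in the statement: the intended hypothesis is that $\mathcal{L}_{+,s_0}$ has \emph{trivial} kernel on $L^2_{rad}(\R^n)$ (nondegeneracy), as supplied by Lemma \ref{nondegeneracy}. This is clear from the paper's proof, which reformulates \eqref{equ} as the fixed-point equation $F(u,s)=u-\left((-\Delta)^s+(\omega+|x|^2)+2\lambda\right)^{-1}\left(2\lambda u+|u|^{p-2}u\right)=0$ on $X_p$, verifies that $F$ is well defined and of class $C^1$ (using Lemma \ref{regularity} for the mapping properties of the resolvent and Lemma \ref{monotonicity} for continuity in $s$), computes $\frac{\partial F}{\partial u}(u_0,s_0)=1+K$ with $K$ compact on $L^2_{rad}(\R^n)$, and invokes nondegeneracy precisely to conclude $-1\notin\sigma(K)$, so that $1+K$ is invertible and the \emph{classical} implicit function theorem applies. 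No Lyapunov--Schmidt reduction is needed; the one-sidedness of $I=[s_0,s_0+\delta_0)$ is not a sign condition emerging from a transversality analysis but simply the direction in which the branch is continued toward $s=1$.

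As written, your argument has a genuine gap at exactly the point you flag: the reduced bifurcation equation $g(s,v)=0$ is never solved. You state the Crandall--Rabinowitz transversality condition $\partial_s g(s_0,0)\notin\mathrm{Range}\left(D_vg(s_0,0)\right)$ as ``the natural sufficient condition'' but give no argument that it holds, and there is no reason it should for this problem. Moreover, if the kernel were genuinely nontrivial, conclusion (ii) of the lemma --- local uniqueness of the solution near $u_0$ for each fixed $s\in I$ --- would in general be false, since bifurcation from a degenerate point typically produces several local branches; this is further evidence that the hypothesis must be read as nondegeneracy. To repair the proof, discard the reduction, take the kernel to be trivial, and carry out the direct implicit function theorem argument; the real work is then the functional-analytic verification that $F$ maps into $X_p$ (including the weighted $L^2(\R^n;|x|^2\,dx)$ bound on the resolvent applied to $2\lambda u+|u|^{p-2}u$), that $\partial F/\partial u$ and $\partial F/\partial s$ exist and are continuous (the latter involving $\log(-\Delta)(-\Delta)^s$ via functional calculus and the norm-resolvent continuity of $s\mapsto(-\Delta)^s+|x|^2$), and that $1+K$ is boundedly invertible on $X_p$ and not merely on $L^2_{rad}(\R^n)$.
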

\begin{proof}
Let $\delta_0>0$ be a small constant to be determined later and $\lambda_{1,s}>0$ be the lowest eigenvalue of $\left(-\Delta\right)^s + |x|^2$ for $s \in [s_0, s_0+\delta_0)$. Define a mapping $F: X_p \times [s_0, s_0+ \delta_0) \to X_p$ by
\begin{align*}
F(u, s):= 
%\left(
\begin{aligned}
u-\left(\left(-\Delta\right)^s + \left(\omega+|x|^2\right) + 2 \lambda\right)^{-1} \left(2\lambda u +|u|^{p-2}u \right), 
% \|u\|_{p}^p-\|u_0\|_p^p
\end{aligned}
%\right)
\end{align*}
where $\omega>0$ satisfies $\omega>-\lambda_{1,s}$ and $\lambda>0$ satisfies $\lambda_{1,s}<\lambda$ for any $s \in [s_0, s_0+\delta_0)$. Due to $\omega>-\lambda_{1,s_0}$, by Lemma \ref{monotonicity}, then there exists $\delta_0>0$ small such that $\omega>-\lambda_{1,s}$ is valid for any $s \in [s_0, s_0+\delta_0)$. Moreover, observe that $\Sigma_1 \subset \Sigma_s$, then
$$
\lambda_{1,s} \leq \inf_{u \in \Sigma_s} \left\{ \left \langle \left(-\Delta+ |x|^2\right) u, u \right \rangle : \|u\|_2 =1 \right\} \leq \lambda_{1,1},
$$
where $\lambda_{1,1}>0$ is defined by
\begin{align*}
\lambda_{1, 1}:=\inf_{u \in \Sigma_1} \left\{ \left \langle\left(-\Delta + |x|^2\right) u, u \right \rangle :  \|u\|_2=1 \right\}.
\end{align*}
This then justifies that there exists $\lambda>0$ such that $\lambda_{1,s}<\lambda$ for any $s \in [s_0, s_0+\delta_0)$.

First we check that $F$ is well-defined. As an immediate consequence of the proof of Lemma \ref{regularity}, we see that $F(u, s) \in L^2(\R^n) \cap L^p(\R^n)$ for any $u \in X_p$ and $s\in [s_0, s_0 +\delta_0)$. Let us now check that $F(u, s) \in L^2(\R^n; |x|^2 \, dx)$ for any $u \in X_p$ and $s\in [s_0, s_0 +\delta)$. Define
$$
f:=\left(\left(-\Delta\right)^s + \left(\omega+|x|^2\right) + 2 \lambda\right)^{-1} \left(2\lambda u +|u|^{p-2}u \right).
$$
As the proof of Lemma \ref{regularity}, we find that $f \in H^s(\R^n)$. This further gives that
$$
\left(-\Delta\right)^s f+ \left(\omega+|x|^2\right) f+ 2 \lambda f=2\lambda u +|u|^{p-2}u.
$$
Therefore, we have that
\begin{align*}
\int_{\R^n} |\left(-\Delta\right)^{\frac s 2}f|^2 \, dx + \int_{\R^n}\left(\omega+|x|^2\right)|f|^2 \, dx + 2 \lambda \int_{\R^n}|f|^2 \, dx&=2\lambda \int_{\R^n} u f \,dx + \int_{\R^n} |u|^{p-2} u f \,dx \\
& \leq 2 \|u\|_2 \|f\|_2 + \|u\|_p^{p-1}\|f\|_p<+\infty,
\end{align*}
where we used H\"older's inequality for the inequality. It then leads to the desired result.

To apply the implicit function theorem, we are going to check that $F$ is of class $C^1$. First we show that $\frac{\partial F}{\partial u}$ exists and 
$$
\frac{\partial F}{\partial u}=1-\left(\left(-\Delta\right)^s + \left(\omega+|x|^2\right) +2 \lambda \right)^{-1} \left(2\lambda +(p-1)|u|^{p-2}\right).
$$
For simplicity, we shall define 
$$
G(u, s):=\left(\left(-\Delta\right)^s + \left(\omega+|x|^2\right) +2 \lambda\right)^{-1} \left(2\lambda u +|u|^{p-2} u\right).
$$
Indeed, it suffices to prove that $\frac{\partial G}{\partial u}$ exists and 
$$
\frac{\partial G}{\partial u}=\left(\left(-\Delta\right)^s + \left(\omega+|x|^2\right) + 2 \lambda \right)^{-1} \left(2\lambda +(p-1)|u|^{p-2}\right).
$$
Observe that, for any $h \in X_p$,
\begin{align*}
&\left\|G(u+h, s)-G(u, s)-\frac{\partial G}{\partial u}(u, s) h\right\|_{L^2 \cap L^p} \\
&=\left\|\left(\left(-\Delta\right)^s + \left(\omega+|x|^2\right) +2 \lambda\right)^{-1}\left(|u+h|^{p-2} (u+h)-|u|^{p-2} u-(p-1)|u|^{p-2} h\right) \right\|_{L^2 \cap L^p} \\
& \leq \left\|\left(\left(-\Delta\right)^s + \lambda \right)^{-1}\left(|u+h|^{p-2} (u+h)-|u|^{p-2} u-(p-1)|u|^{p-2} h\right) \right\|_{L^2 \cap L^p} \\
& \lesssim \left\||u+h|^{p-2} (u+h)-|u|^{p-2} u-(p-1)|u|^{p-2} h\right\|_{\frac{p}{p-1}}=o(\|h\|_p)=o(\|h\|_{L^2 \cap L^p}),
\end{align*}
where we used the fact that the fundamental solution $\mathcal{K}$ to \eqref{bequ} satisfies $\mathcal{K} \in L^{\frac p 2}(\R^n) \cap L^{\frac{2p}{p+2}} (\R^n)$ and Young's inequality.
Define
$$
g:=\left(\left(-\Delta\right)^s + \left(\omega+|x|^2\right) +2 \lambda\right)^{-1}\left(|u+h|^{p-2} (u+h)-|u|^{p-2} u-(p-1)|u|^{p-2} h\right).
$$
Since
$$
|u+h|^{p-2} (u+h)-|u|^{p-2} u-(p-1)|u|^{p-2} h \in L^{p'}(\R^n),
$$
then $g \in H^s(\R^n)$ by arguing as the proof of Lemma \ref{regularity}. Then we write
$$
\left(-\Delta\right)^s g + \left(\omega+|x|^2\right) g +2 \lambda g= |u+h|^{p-2} (u+h)-|u|^{p-2} u-(p-1)|u|^{p-2} h.
$$
It then follows that
\begin{align*}
&\int_{\R^n} |\left(-\Delta\right)^{\frac s 2}g|^2 \, dx + \int_{\R^n}\left(\omega+|x|^2\right)|g|^2 \, dx + 2 \lambda \int_{\R^n}|g|^2 \, dx \\
& = \int_{\R^n} \left(|u+h|^{p-2} (u+h)-|u|^{p-2} u-(p-1)|u|^{p-2} h\right) g \, dx=o(\|h\|_p)\|g\|_p.
\end{align*}
Using the fact that $H^s(\R^n)$ is continuously embedded into $L^p(\R^n)$ and Young's inequality, we then obtain that 
$$
\|g\|_{L^2(\R^n; |x|^2 \,dx)} \lesssim o(\|h\|_2).
$$ 
Consequently, there holds that
\begin{align*}
&\left\|G(u+h, s)-G(u, s)-\frac{\partial G}{\partial u}(u, s) h\right\|_{L^2(\R^n; |x|^2\,dx)} \\
&=\left\|\left(\left(-\Delta\right)^s + \left(\omega+|x|^2\right) +2 \lambda\right)^{-1}\left(|u+h|^{p-2} (u+h)-|u|^{p-2} u-(p-1)|u|^{p-2} h\right) \right\|_{L^2(\R^n; |x|^2\,dx)}\\
&=\|g\|_{L^2(\R^n; |x|^2 \,dx)} \lesssim o(\|h\|_2). 
%& \leq \left\|\left(\left(-\Delta\right)^s + \lambda \right)^{-1}\left(|u+h|^{p-2} (u+h)-|u|^{p-1} u-(p-1)|u|^{p-2} h\right) \right\|_{L^2(\R^n; |x|^2\,dx)}  =o(\|h\|_{L^2(\R^n; |x|^2\,dx)}).
\end{align*}
Thus we conclude that
$$
\left\|G(u+h, s)-G(u, s)-\frac{\partial G}{\partial u}(u, s) h\right\|_{X_p} \lesssim o(\|h\|_{X_p}).
$$
The desired result follows.

Next we are going to verify that $\frac{\partial F}{\partial u}$ is continuous. Indeed, it suffices to show that $\frac{\partial G}{\partial u}$ is continuous. For this aim, we shall demonstrate that, for any $\eps>0$, there exists $\delta>0$ such that $\|u-\tilde{u}\|_{X_p} +|s-\tilde{s}| <\delta$, then, for any $h \in X_p$,
\begin{align} \label{conti}
\left\|\left(\frac{\partial G}{\partial u}(u, s)-\frac{\partial G}{\partial u}(\tilde{u}, \tilde{s})\right) h\right\|_{X_p}<\eps \|h\|_{X_p}.
\end{align}
%Since
%$$
%\|f\|_{L^2 \cap L^p} \lesssim \left\|\left((-\Delta)^{\frac {s_p} {2}} + 1 \right) f\right\|_2, \quad s_p:=\frac{(p-2)n}{2p},
%$$
%then there holds that
%$$
%\left\|\left(\frac{\partial G}{\partial u}(u, s)-\frac{\partial G}{\partial u}(\tilde{u}, \tilde{s})\right) h\right\|_{X_p} \lesssim  \left\|\left((-\Delta)^{\frac {s_p} {2}} + 1 \right) \left(\frac{\partial G}{\partial u}(u, s)-\frac{\partial G}{\partial u}(\tilde{u}, \tilde{s})\right) h\right\|_2.
%$$
Observe that
\begin{align*}
\left(\frac{\partial G}{\partial u}(u, s)-\frac{\partial G}{\partial u}(\tilde{u}, \tilde{s})\right) h
&= \left(A_s -A_{\tilde{s}}\right) \left(2\lambda +(p-1)|u|^{p-2} \right) h+ A_{\tilde{s}} \left(2 \lambda +(p-1)\left(|u|^{p-2} -|\tilde{u}|^{p-2}\right)\right) h,
\end{align*}
where
$$
A_s:=\left(\left(-\Delta\right)^s + \left(\omega+|x|^2\right) + 2 \lambda \right)^{-1}, \quad A_{\tilde{s}}:=\big(\left(-\Delta\right)^{\tilde{s}} + \left(\omega+|x|^2\right) + 2 \lambda\big)^{-1}.
$$
Note that
$$
\|f\|_{L^2 \cap L^p} \lesssim \left\|\big((-\Delta)^{\frac {s_p} {2}} + 1 \big) f\right\|_2, \quad 0<s_p:=\frac{(p-2)n}{2p}<s.
$$
Then, by Plancherel’s identity, the mean value theorem and Young's inequality, there holds that
\begin{align*}
\left\|\left(A_s -A_{\tilde{s}}\right) \left(2\lambda +(p-1)|u|^{p-2} \right) h\right\|_{L^2 \cap L^p} &\lesssim \left\|\big((-\Delta)^{\frac {s_p} {2}} + 1 \big) \left(A_s -A_{\tilde{s}}\right) \left(2\lambda +(p-1)|u|^{p-2} \right) h\right\|_2 \\
& \lesssim  |s-\tilde{s}| \left(\|h\|_2 +\|h\|_p+\|u\|_{p}^{p-2}\|h\|_p\right).
\end{align*}
In addition, we see that
\begin{align*}
\left\|\left(A_s -A_{\tilde{s}}\right) \left(2\lambda +(p-1)|u|^{p-2} \right) h\right\|_{L^2(\R^n; |x|^2 \,dx)} 
%& \lesssim \left\|\big(\left(-\Delta\right)^{\tilde{s}}-\left(-\Delta\right)^s\big) \left(\left(-\Delta\right)^s + \left(\omega+|x|^2\right) + 2 \lambda\right)^{-1}  \left(2\lambda +(p-1)|u|^{p-2} \right) h\right\|_2 \\
 \lesssim  |s-\tilde{s}| \left(\|h\|_2 +\|h\|_p+\|u\|_{p}^{p-2}\|h\|_p\right).
\end{align*}
Notice that 
$$
%\left\|A_s f\right\|_{L^2 \cap L^p} \lesssim \|f\|_{L^2 \cap L^p}, \quad 
\left\|A_s f\right\|_{L^2(\R^n; |x|^2 \,dx)} \lesssim \|f\|_2, \quad \left\|A_s f\right\|_{L^2(\R^n; |x|^2 \,dx)} \lesssim \|f\|_{\frac{p}{p-1}}.
$$
Further, we can conclude that
\begin{align*}
\left\|A_{\tilde{s}} \left(2 \lambda+\left(|u|^{p-2} -|\tilde{u}|^{p-2} \right)\right) h \right\|_{X_p} &\lesssim \|h\|_2 +\|h\|_p+ \left\||u|^{p-2} -|\tilde{u}|^{p-2}\right\|_{\frac{p}{p-2}}\|h\|_p.
\end{align*}
Note that
$$
\left\||u|^{p-2}-|\tilde{u}|^{p-2}\right\|_{\frac{p}{p-2}} \leq \left\||u-\tilde{u}|^{p-2}\right\|_{\frac{p}{p-2}}=\left\|u-\tilde{u}\right\|_{p}^{p-2}, \quad 2<p \leq 3
$$
and
$$
\left\||u|^{p-2}-|\tilde{u}|^{p-2}\right\|_{\frac{p}{p-2}} \lesssim \left\|\left(|u|^{p-3}+|\tilde{u}|^{p-3}\right)|u-\tilde{u}|\right\|_{\frac{p}{p-2}} \leq \left(\|u\|_p^{p-3} +\|\tilde{u}\|_p^{p-3}\right) \|u -\tilde{u}\|_p, \quad p>3.
$$
Consequently, from the calculations above, \eqref{conti} holds true. This implies that $\frac{\partial F}{\partial u}$ is continuous. By a similar argument, we are also able to show that $\frac{\partial F}{\partial s}$ exists and
$$
\frac{\partial F}{\partial s}=-\left(\left(-\Delta\right)^s \log (-\Delta) \right)\left(\left(-\Delta\right)^s + \left(\omega+|x|^2\right) + 2 \lambda\right)^{-2} \left(2\lambda u +|u|^{p-2}u \right).
$$
In addition, we can prove that  $\frac{\partial F}{\partial s}$. Thus we have that $F$ is of class $C^1$.

Now we employ the implicit function theorem to establish theorem. Note first that $F(u_0, s_0)=0$ and
$$
\frac{\partial F}{\partial u}(u_0, s_0)=1+K, \quad K:=-\left(\left(-\Delta\right)^{s_0} + \left(\omega+|x|^2\right) +2 \lambda \right)^{-1} \left(2\lambda +(p-1)|u_0|^{p-2} \right).
$$
It is simple to see that $K$ is compact on $L^2_{rad}(\R^n)$. Moreover, from Lemma \ref{nondegeneracy}, we have that $-1 \not \in \sigma(K)$. Then $1+ K$ is invertible. Furthermore, arguing as before, we can show that $1+K$ is bounded from $X_p$ to $X_p$. This implies that $(1+K)^{-1}$ is bounded from $X_p$ to $X_p$. It then follows from the implicit function theorem that theorem holds true. This completes the proof.
\end{proof}

In the following, we shall consider the maximum extension of the branch $u_s$ for $s \in [s_0, s_*)$, where $s_*>s_0$ is given by
$$
s_*:=\sup \left\{s_0<\tilde{s}<1, u_s \in C^1([s_0, \tilde{s}); X_p), u_s \,\,\mbox{satisfies the assumptions of Lemma \ref{bifurcation} for} \,\, s \in [s_0, \tilde{s}) \right\}.
$$

\begin{lem} \label{bdd}
There holds that
$$
\int_{\R^n} \left(w + |x|^2\right) |u_s|^2\, dx \sim \int_{\R^n} |(-\Delta)^{\frac s 2} u_s|^2 \, dx \sim \int_{\R^n} |u_s|^p \,dx \sim 1
$$
for any $s \in [s_0, s_*)$.
\end{lem}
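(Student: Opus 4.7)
The plan is to combine the two fundamental integral identities satisfied by $u_s$ (Nehari and fractional Pohozaev) with the mountain-pass characterization of the ground-state energy, and then extract bounds on each quantity. Setting $T_s := \int |(-\Delta)^{s/2} u_s|^2 \, dx$, $M_s := \|u_s\|_2^2$, $P_s := \int |x|^2 |u_s|^2 \, dx$, $V_s := \omega M_s + P_s$ and $N_s := \|u_s\|_p^p$, testing \eqref{equ} against $u_s$ yields the Nehari identity $T_s + V_s = N_s$, while the fractional scaling identity $\int (x\cdot \nabla u)(-\Delta)^s u \, dx = -\tfrac{n-2s}{2} T_s$ combined with integration by parts gives the Pohozaev identity $s T_s - P_s = \tfrac{n(p-2)}{2p} N_s$. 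Eliminating $T_s$ between the two produces the auxiliary identity
\[
s V_s + P_s = \frac{2ps - n(p-2)}{2p}\, N_s.
\]
Moreover, the mountain-pass value satisfies $m_s = J_s(u_s) = (\tfrac{1}{2} - \tfrac{1}{p}) N_s$.

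For the upper bounds I fix any Schwartz function $\varphi_0$ independent of $s$ and note that $s \mapsto \int |\xi|^{2s} |\widehat{\varphi_0}(\xi)|^2 \, d\xi$ is continuous by dominated convergence, hence bounded on $[s_0, 1]$, while the remaining norms of $\varphi_0$ do not depend on $s$. Continuity of $\lambda_{1, s}$ (Lemma \ref{monotonicity}) together with $\omega > -\lambda_{1, s_0}$ gives the uniform spectral gap $\omega + \lambda_{1, s} \geq \delta > 0$ on $[s_0, s_*)$, so $m_s \leq \max_{t > 0} J_s(t\varphi_0)$ is uniformly bounded, which yields $N_s \lesssim 1$. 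The Rayleigh inequality $T_s + P_s \geq \lambda_{1, s} M_s$ then produces the coercive estimate $T_s + V_s \gtrsim T_s + M_s + P_s$, and combined with Nehari this gives $T_s + M_s + P_s \lesssim N_s \lesssim 1$.

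For the lower bound on $N_s$ I invoke the Gagliardo--Nirenberg / Sobolev embedding $\|u\|_p \leq C\|u\|_{H^s}$, valid with constant uniform in $s \in [s_0, 1]$ because $s_0 > \tfrac{n(p-2)}{2p}$ (from $p < 2_{s_0}^*$). On the Nehari manifold this reads $N_s \leq C(T_s + M_s)^{p/2} \leq C'(T_s + V_s)^{p/2} = C' N_s^{p/2}$ via the above coercive bound, and since $p > 2$ this forces $N_s \gtrsim 1$. The Pohozaev identity then delivers $T_s \geq \tfrac{n(p-2)}{2ps} N_s \gtrsim 1$.

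The step I expect to be most delicate is the lower bound on $V_s$. For $\omega \geq 0$ one has $V_s \geq P_s \geq 0$, so the auxiliary identity forces $(s+1) P_s \leq s V_s + P_s = \frac{2ps - n(p-2)}{2p} N_s$, hence $P_s \leq \frac{2ps - n(p-2)}{2p(s+1)} N_s$ and therefore $V_s \geq \frac{2ps - n(p-2)}{2p(s+1)} N_s \gtrsim 1$. The case $\omega < 0$ is the main obstacle, since $V_s = -|\omega| M_s + P_s$ becomes a difference of nonnegative quantities; there I plan to rewrite Nehari as $T_s + P_s = N_s + |\omega| M_s$ and combine with the Rayleigh bound $T_s + P_s \geq \lambda_{1, s} M_s$ to obtain the uniform estimate $M_s \leq (\lambda_{1, s} - |\omega|)^{-1} N_s$, then substitute into the auxiliary identity and use the uniform gap $\omega + \lambda_{1, s} \geq \delta > 0$ on $[s_0, s_*)$ to extract $V_s \gtrsim N_s \gtrsim 1$. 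This last step is the most quantitatively sensitive, as it requires keeping the spectral constants under uniform control along the entire branch.
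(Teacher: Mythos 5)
Your proposal rests on the same backbone as the paper's proof: the Nehari identity, the fractional Pohozaev identity, and the auxiliary identity obtained by eliminating one variable. The paper's notation $M_s:=\omega\|u_s\|_2^2$, $H_s:=\|xu_s\|_2^2$, $T_s$, $V_s:=\|u_s\|_p^p$ corresponds to your $\omega M_s$, $P_s$, $T_s$, $N_s$, and your ``auxiliary identity'' is exactly the paper's combination of \eqref{ph1} and \eqref{ph3}. What differs is how you attempt to close the upper and lower bounds, and two of those steps have genuine gaps.

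\textbf{Upper bound via the mountain-pass characterization.} You write $m_s = J_s(u_s) = \bigl(\tfrac12-\tfrac1p\bigr)N_s$ and then bound $m_s$ by a fixed test function. The first equality is only known when $u_s$ is a ground state (equivalently, a minimal-energy critical point). But $u_s$ for $s>s_0$ is \emph{defined} by the implicit-function-theorem continuation of $u_{s_0}$; that $u_s$ remains a ground state along the whole branch (and has Morse index one) is precisely what Lemma~\ref{extend} establishes \emph{after} Lemma~\ref{bdd}, using Lemma~\ref{bdd} as input. Invoking $J_s(u_s)=m_s$ here is therefore circular. This is exactly why the paper does not argue through the mountain-pass value at all, but instead cites \cite[Lemma 8.2]{FLS}, whose a priori bound applies to any solution on the continuation branch, not only to variational minimizers. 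To rescue your route you would need an a priori upper bound for $J_s(u_s)$ along the branch that does not presuppose minimality.

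\textbf{Lower bound on $V_s=\int(\omega+|x|^2)|u_s|^2\,dx$ when $\omega<0$.} Following your plan: from Nehari and Rayleigh you get $M_s \le (\lambda_{1,s}-|\omega|)^{-1}N_s$, and substituting into the auxiliary identity gives
$$
(s+1)V_s \;\ge\; \left(\frac{2ps-n(p-2)}{2p}-\frac{|\omega|}{\lambda_{1,s}-|\omega|}\right)N_s.
$$
The coefficient in parentheses is \emph{not} sign-definite: $\frac{2ps-n(p-2)}{2p}<s<1$, while $\frac{|\omega|}{\lambda_{1,s}-|\omega|}$ can be arbitrarily large when $|\omega|$ is close to $\lambda_{1,s_0}$. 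So this substitution can produce a strictly negative lower bound, and the argument does not close. (For the record, the paper's own deduction $M_s+H_s\sim V_s$ from the two-sided inequality $s_0M_s+(1+s_0)H_s\lesssim V_s\lesssim s_*M_s+(1+s_*)H_s$ also tacitly uses $M_s\ge0$, i.e., $\omega\ge0$; the paper does not flag the $\omega<0$ case either. Your instinct that this step is delicate is correct, but the specific fix you propose does not work.)

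\textbf{What does work in your sketch.} The coercivity estimate $T_s+V_s\gtrsim T_s+M_s+P_s$ via the uniform spectral gap, and the lower bound $N_s\gtrsim1$ from $N_s\le C(T_s+M_s)^{p/2}\lesssim N_s^{p/2}$ with a Sobolev constant uniform over $s\in[s_0,1]$ (since $s_0>n(p-2)/(2p)$), are both sound and are in the same spirit as the paper's interpolation chain (there written out via the two-step inequality $\|u_s\|_p^p\lesssim (M_s+H_s)^\theta\bigl(\int|(-\Delta)^{s_0/2}u_s|^2\bigr)^{\cdots}$ followed by fractional interpolation between $(-\Delta)^{s_0/2}$ and $(-\Delta)^{s/2}$). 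One further caveat: the uniform gap $\omega+\lambda_{1,s}\ge\delta>0$ on all of $[s_0,s_*)$ needs justification beyond Lemma~\ref{monotonicity} alone, since $\omega>-\lambda_{1,s_0}$ and continuity of $s\mapsto\lambda_{1,s}$ only give the gap on a small interval near $s_0$ unless one also shows $\lambda_{1,s}$ does not decrease below $\lambda_{1,s_0}$.
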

\begin{proof}
Define
$$
M_s:=w \int_{\R^n} |u_s|^2\, dx, \quad H_s:= \int_{\R^n} |x|^2 |u_s|^2\, dx, \quad T_s:=\int_{\R^n} |(-\Delta)^{\frac s 2} u_s|^2 \, dx, \quad V_s:=\int_{\R^n} |u_s|^p \,dx.
$$
Since $u_s \in H^s(\R^n)$ is a solution to \eqref{equ}, then
\begin{align} \label{ph1}
T_s + M_s +H_s=V_s.
\end{align} 
In addition, we have that $u_s$ satisfies the following Pohozaev identity,
\begin{align} \label{ph2}
\frac{N-2s}{2} T_s +\frac N 2 M_s + \frac{N+2}{2}H_s=\frac N p V_s.
\end{align}
Combining \eqref{ph1} and \eqref{ph2}, we see that
\begin{align} \label{ph3}
s T_s -H_s=\frac{N(p-2)}{2p} V_s.
\end{align}
It follows from \eqref{ph1} and \eqref{ph3} that
\begin{align*}
s_0 M_s +(1+s_0)H_s \leq  s M_s +(1+s)H_s=\frac{2ps-N(p-2)}{2p} V_s <\frac{2ps_*-N(p-2)}{2p} V_s
\end{align*}
and
\begin{align*}
s_* M_s +(1+s_*)H_s >  s M_s +(1+s)H_s=\frac{2ps-N(p-2)}{2p} V_s \geq \frac{2ps_0-N(p-2)}{2p} V_s.
\end{align*}
Consequently, we have that $M_s + H_s \sim V_s$ for any $s \in [s_0, s_*)$. It follows from \eqref{ph1} and \eqref{ph3} that
\begin{align*}
(1+s_0)T_s \leq (1+s)T_s +M_s=\frac{N(p-2)+2p}{2p}V_s
\end{align*}
and
\begin{align*}
(1+s_*)T_s > (1+s)T_s +M_s=\frac{N(p-2)+2p}{2p}V_s.
\end{align*}
This leads to $T_s \sim V_s$ for any $s \in [s_0, s_*)$. Therefore, we obtain that 
\begin{align} \label{equi}
M_s +H_s \sim T_s \sim V_s
\end{align} 
for any $s \in [s_0, s_*)$. Since $2<p<p_{s_0}$, there exists $0<\theta<1$ such that $p=2\theta + (1-\theta) p_{s_0}$.  From Gagliardo-Nirenberg's inequailty and H\"older's inequality, we then get that
\begin{align} \label{equiv1}
V_s \leq M_s^{\theta} \left(\int_{\R^n}|u_s|^{p_{s_0}}\, dx \right)^{(1-\theta)} &\lesssim \left(M_s +H_s\right)^{\theta} \left(\int_{\R^n}|(-\Delta)^{\frac {s_0} {2}}u_s|^2 \, dx \right)^{\frac{p_{s_0}(1-\theta)}{2}}.
%\leq \left(M_s +H_s\right)^{\theta} T_s^{\frac{p_{s_0}(1-\theta)}{2}}.
\end{align}
In addition, there holds that
\begin{align} \label{equiv2}
\int_{\R^n}|(-\Delta)^{\frac {s_0} {2}}u_s|^2 \, dx \leq \left(\int_{\R^n}|u_s|^2 \, dx\right)^{\frac{s-s_0}{s}}\left(\int_{\R^n}|(-\Delta)^{\frac {s} {2}}u_s|^2 \, dx \right)^{\frac{s_0}{s}}.
\end{align}
Utilizing \eqref{equi}, \eqref{equiv1} and \eqref{equiv2} then implies that
\begin{align*} 
M_s +H_s \sim T_s \sim V_s \gtrsim 1
\end{align*} 
for any $s \in [s_0, s_*)$. Arguing as the proof of \cite[Lemma 8.2]{FLS}, we can obtain that $V_s \lesssim 1$ for any $s \in [s_0, s_*)$. This in turn implies that
$$
M_s +H_s \sim T_s \sim V_s \lesssim 1
$$
for any $s \in [s_0, s_*)$. This completes the proof.
\end{proof}
%In the following, we are going to infer that
%\begin{align*} 
%M_s +H_s \sim T_s \sim V_s \lesssim 1
%\end{align*} 
%for any $s \in [s_0, s_*)$. Since $u_s$ solves \eqref{equ}, by differentiating \eqref{equ} with respect to $s$, then
%$$
%\mathcal{L}_{+, s} \dot{u}_s=-(-\Delta)^s \log (-\Delta) u_s, \quad \dot{u}_s:=\frac{d}{ds}u_s.
%$$
%Note that
%$$
%\mathcal{L}_{+, s} u_s=-(p-2)|u_s|^{p-2}u_s.
%$$
%Then there holds that
%$$
%\frac{d}{ds}V_s=p \int_{\R^n}|u_s|^{p-1} \dot{u}_s \,dx=\frac{p}{p-2}\int_{\R^n}u_s (-\Delta)^s \log (-\Delta) u_s \,dx.
%$$

\begin{lem} \label{coercivity}
Let $n \geq 1$, $s_0 \leq s \leq 1$, $\omega>-\lambda_{1, s_0}$ and $2<p<2_{s_0}^*$. Suppose that $u_s \in X_p$ is a ground state to \eqref{equ}. Then there exists $\mu_s>0$ such that
\begin{align} \label{coercive}
\liminf_{\sigma \to s^-} \mathcal{L}_{+, \sigma} \mid_{\{u_{\sigma}\}^{\bot}} \geq \mu_s.
\end{align}
\end{lem}
\begin{proof}
Define
\begin{align} \label{cmin}
\alpha_s:=\inf \left\{\langle \mathcal{L}_{+,s} f, f \rangle : f \bot u_s, \|f\|_2=1\right\}.
\end{align}
Obviously, we have that $\alpha_s \geq 0$. First we shall verify that $\alpha_s>0$ is attained. Let $\{f_k\}$ be a minimizing sequence to \eqref{cmin} such that $f_k \bot u_s$, $\|f_k\|_2=1$ and $\langle \mathcal{L}_{+,s} f_k, f_k \rangle=\alpha_s+o_k(1)$. 
%Without restriction, we may assume that $\{f_k\}$ is bell-shaped. 
Observe that $\{f_k\}$ is bounded in $\Sigma_s$. Therefore, there exists a function $f \in \Sigma_s$ such that $f_k \wto f$ in $\Sigma_s$ and $f_k \to f$ in $L^q(\R^n)$ for any $q \in [2, 2_s^*)$ as $n \to \infty$. This leads to $f \bot u_s$, $\|f\|_2=1$ and $\langle \mathcal{L}_{+,s} f, f \rangle=\alpha_s$. Contrarily, we assume that $\alpha_s=0$. When $s<1$, using the fact that $Ker [\mathcal{L}_{+, s}]=\{0\}$ by Lemma \ref{nondegeneracy} and arguing as the proof of \cite[Proposition 6]{SS}, we are able to reach a contradiction. This in turn shows that $\alpha_s>0$ and 
$$
\langle \mathcal{L}_{+,s} u, u \rangle  \geq \alpha_s  \|u\|_2^2, \quad \forall \,\, u \bot u_s.
$$
While $s =1$, using the fact that $Ker[\mathcal{L}_{+, 1}]=\{0\}$ and following the spirit of the proof of \cite[Proposition 6]{SS}, we can also derive that $\alpha_1>0$ and
$$
\langle \mathcal{L}_{+,1} u, u \rangle  \geq \alpha_1  \|u\|_2^2, \quad \forall \,\, u \bot u_1.
$$
Thus the proof is completed.
\end{proof}

\begin{lem} \label{positive}
Let $u_{s_0}>0$ be a solution to \eqref{equ} with $s=s_0$. Then, for any $s \in [s_0, s_*)$, there holds that
$u_s(x)>0$ for $x \in \R^n$ and $u_s(x) \lesssim |x|^{-n}$ for $|x| \gtrsim 1$.
\end{lem}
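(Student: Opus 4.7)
My plan is to prove the positivity by a connectedness argument along the $C^1$-branch $s\mapsto u_s$ supplied by Lemma \ref{bifurcation}, and then to establish the decay $u_s(x)\lesssim |x|^{-n}$ by a supersolution/comparison argument. Set
$$
A := \{s \in [s_0, s_*) : u_s > 0 \text{ on } \R^n\};
$$
by hypothesis $s_0\in A$. For closedness of $A$ in $[s_0,s_*)$: if $s_k\in A$ and $s_k\to s^*$, then Lemma \ref{bifurcation}(i) gives $u_{s_k}\to u_{s^*}$ in $X_p$, hence $u_{s^*}\ge 0$ a.e., and $u_{s^*}\not\equiv 0$ by Lemma \ref{bdd}. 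Rewriting \eqref{equ} in the form $\bigl((-\Delta)^s+\omega+|x|^2+C\bigr)u_{s^*} = \bigl(C+|u_{s^*}|^{p-2}\bigr)u_{s^*}\ge 0$, the strong maximum principle for the fractional Schr\"odinger operator then upgrades $u_{s^*}\ge 0$ to $u_{s^*}>0$.

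For openness I would first upgrade the $X_p$-continuity of $s\mapsto u_s$ to $L^\infty$ and $C^0_{\mathrm{loc}}$-continuity by iterating the resolvent representation
$$
u_s = \bigl((-\Delta)^s + \omega + |x|^2 + 2\lambda\bigr)^{-1}\bigl(2\lambda u_s + |u_s|^{p-2} u_s\bigr),
$$
combined with Young's inequality and the kernel bound $\mathcal{K}\in L^1(\R^n)$ from \cite[Lemma C.1]{FLS}, exactly as in the proof of Lemma \ref{regularity}. The corresponding norms stay uniform in $s$ on compact subintervals of $[s_0,s_*)$ thanks to Lemma \ref{bdd}. Openness then follows by contradiction: if $s_k\to s^*\in A$ and there were $x_k\in\R^n$ with $u_{s_k}(x_k)\le 0$, then either $|x_k|$ remains bounded, contradicting locally uniform convergence to the strictly positive $u_{s^*}$, or $|x_k|\to\infty$, which is ruled out by the decay bound established next.

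The decay $u_s(x)\lesssim |x|^{-n}$ for $|x|\gtrsim 1$ I would establish by constructing an explicit supersolution: the function $w(x):=C|x|^{-n}$ satisfies $(-\Delta)^s w\sim |x|^{-n-2s}$ outside a large ball by homogeneity, so
$$
\bigl((-\Delta)^s+\omega+|x|^2\bigr)w \ \ge\ c\,|x|^{2-n}\quad\text{for } |x|\ \text{large},
$$
whereas $u_s^{p-1}$ decays faster than any fixed polynomial rate once a crude preliminary decay is in hand (obtainable by iterating the resolvent identity against the free kernel). Choosing $C$ large enough that $w\ge u_s$ on a ball that exhausts the interaction region (possible since $u_s\in L^\infty$), the comparison principle for the fractional Schr\"odinger operator with confining potential yields $u_s\le w$ on the complement, with constants uniform in $s$ on compact subintervals.

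The main obstacle will be making the decay bound and the comparison step uniform in $s$: the operator $(-\Delta)^s+\omega+|x|^2+2\lambda$ has a non-translation-invariant Green kernel, so the free estimate $\mathcal{K}\in L^1(\R^n)$ does not transfer directly, and the leading behavior of $(-\Delta)^s|x|^{-n}$ must be computed carefully and balanced against the growth of $|x|^2$ in order to validate both the preliminary decay and the supersolution inequality above.
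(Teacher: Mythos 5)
Your proposal takes a genuinely different route from the paper. The paper does not argue positivity directly with maximum principles and barriers; instead it follows the operator-theoretic scheme of Frank--Lenzmann--Silvestre \cite[Lemma 8.3]{FLS}: it verifies that $\mathcal{L}_{-,s}=(-\Delta)^s+(\omega+|x|^2)-|u_s|^{p-2}$ has the Perron--Frobenius property (by showing $e^{-tH}$ with $H=(-\Delta)^s+|x|^2$ is positivity improving and that $\omega+|u_s|^{p-2}$ is in the Kato class via Green-function estimates against $((-\Delta)^s+\lambda)^{-1}$), and that $\mathcal{L}_{-,\tilde s}\to\mathcal{L}_{-,s}$ in norm-resolvent sense. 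Since $\mathcal{L}_{-,s}u_s=0$ always, and the lowest eigenvalue is simple with positive eigenfunction, continuity of the spectrum forces $0$ to remain the bottom eigenvalue along the branch, and positivity (together with the $|x|^{-n}$ decay) is then read off from the FLS argument. This buys a clean propagation of positivity without ever touching pointwise bounds.

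There is a genuine gap in your openness step. You want to rule out $u_{s_k}(x_k)\le 0$ with $|x_k|\to\infty$ by invoking ``the decay bound established next,'' but that bound is one-sided: $u_s(x)\lesssim|x|^{-n}$ tells you $u_s$ is small at infinity, not that it is nonnegative there. Worse, your own derivation of the decay bound uses the comparison principle for $u_s$ against the supersolution $w$, which requires already knowing $u_s\ge 0$ (a comparison principle controls $u_s\le w$ only once one has a one-sided barrier, typically by comparing the nonnegative function $u_s$); so the decay and the openness each presuppose the other. A decay bound on $|u_s|$ without a sign assumption would need a Kato-type inequality to make $|u_s|$ a subsolution, which is itself nontrivial for $(-\Delta)^s$ and is not provided. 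Finally, the computation $(-\Delta)^s|x|^{-n}\sim|x|^{-n-2s}$ at infinity requires a specific regularization of $|x|^{-n}$ near the origin (it is not locally integrable in $\R^n$), and the preliminary ``crude decay'' you invoke for $u_s^{p-1}$ is not produced. You correctly flag the non-translation-invariance of the Green kernel as an obstacle, but these two circularities are the central issues: as written, neither the openness nor the decay step closes. The paper's route avoids them entirely by working at the level of spectra, not pointwise bounds.
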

\begin{proof}
In the spirit of the proof of \cite[Lemma 8.3]{FLS}, we need to verify that the operator $\mathcal{L}_{-, s}$ enjoys the Perron-Frobenius type property, where
$$
\mathcal{L}_{-,s}:=(-\Delta)^s + \left(\omega+|x|^2\right) -|u|^{p-2}.
$$ 
In addition, we need to check that $\mathcal{L}_{-, \tilde{s}} \to \mathcal{L}_{-,s}$ as $\tilde{s} \to s$ in norm resolvent sense. 

Define $H:=(-\Delta)^s+|x|^2$, which generates a semigroup $e^{-t H}$ with positive integral kernel. Then we have that $e^{-t H}$ acting on $L^2(\R^n)$ is positivity improving. Next we show that $w+|u|^{p-2}$ belongs to Kato class, i.e.
\begin{align} \label{kato}
\lim_{\lambda \to \infty} \left\|(H+\lambda)^{-1} \left(\omega+|u|^{p-2}\right)\right\|_{L^{\infty} \to L^{\infty}}=0.
\end{align}
Note that $H+ \lambda>(-\Delta)^s + \lambda$, then
$$
\left(H+ \lambda\right)^{-1}< \left((-\Delta)^s +\lambda\right)^{-1}.
$$
Let $\mathcal{K}$ be the fundamental solution to the equation
$$
(-\Delta)^s u+\lambda u=0.
$$
Then we have that
$$
\mathcal{K}(x)=\int_0^{+\infty} e^{-\lambda t} \mathcal{H}(x, t) \, dt,
$$
where
$$
\mathcal{H}(x, t):=\int_{\R^n} e^{2 \pi \textnormal{i} x \cdot \xi -t |\xi|^{2s}} \, d \xi.
$$
From $(A 4)$ in \cite[Appendix A]{FQT}, we find that
$$
0 <\mathcal{H}(x ,t) \lesssim \min \left\{t^{-\frac{n}{2s}}, t|x|^{-n-2s}\right\}.
$$
This gives that, for any $q \geq 1$,
\begin{align*}
\left\|\mathcal{H}\right\|_q \lesssim \left(\int_{|x| \leq t^{\frac {1}{2s}}} t^{-\frac{nq}{2s}}\, dx \right)^{\frac 1 q} + \left(\int_{|x| \geq t^{\frac {1}{2s}}} t^q |x|^{-(n+2s)q} \, dx \right)^{\frac 1 q}  \lesssim t^{-\frac{n}{2s}\left(1-\frac 1 q\right)}.
\end{align*}
It then follows that
\begin{align*}
\left\|\mathcal{K}\right\|_q \leq \int_0^{+\infty} e^{-\lambda t} \left\|\mathcal{K}(\cdot, t)\right\|_q \, dt \lesssim \int_0^{+\infty} e^{-\lambda t} t^{-\frac{n}{2s}\left(1-\frac 1 q\right)} \, dt \lesssim \lambda^{\frac{n}{2s}\left(1-\frac 1 q\right)-1},
\end{align*}
where $q \geq 1$ satisfies 
$$
\frac{n}{2s}\left(1-\frac 1 q\right)<1.
$$
%\begin{align*}
%\mathcal{K}(x) \lesssim |x|^{-n-2s} \int_0^{+\infty} e^{-\lambda t} t \, dx= 2 \lambda^{-2}  |x|^{-n-2s}, \quad |x| \geq 1
%\|\mathcal{K}\|_q =\left\| |x|^{-n-2s} \int_0^{|x|^{2s}} e^{-\lambda t} t\, dt\right\|_q +\left\| \int_{|x|^{2s}}^{+\infty} e^{-\lambda t} t^{-\frac{n}{2s}}\, dt\right\|_q
%\end{align*}
%and
%\begin{align*}
%\mathcal{K}(x) \lesssim |x|^{-n-2s} \int_0^{|x|^{2s}} e^{-\lambda t} t\, dt +\int_{|x|^{2s}}^{+\infty} e^{-\lambda t} t^{-\frac{n}{2s}}\, dt \lesssim \lambda^{-1} |x|^{-n}, \quad 0<|x| \leq 1.
%\end{align*}
Using Young's inequality, we then get that, for any $f \in L^{\infty}(\R^n)$,
$$
\|\left((-\Delta)^s +\lambda\right)^{-1}\left(\omega+|u|^{p-2}\right)f\|_{\infty} \lesssim \lambda^{-1}\omega\|f\|_{\infty} + \lambda^{\frac{n}{2s}\left(1- \frac 2 q\right)-1} \|f\|_{\infty},
$$
which readily yields that 
$$
\|\left((-\Delta)^s +\lambda\right)^{-1}\left(\omega+|u|^{p-2}\right)\|_{L^{\infty}\to L^{\infty}}=o_{\lambda}(1). 
$$
Thus \eqref{kato} holds true and the desired result follows. Arguing as the proof of \cite[Lemma C.2]{FL}, we conclude that the operator $\mathcal{L}_{-, s}$ enjoys Perron-Frobenius type property.

Next we prove the convergence of the operator in norm resolvent sense. Observe first that
\begin{align*}
\mathcal{L}_{-,\tilde{s}} + z&=(-\Delta)^s + \left(\omega+|x|^2\right) -|u|^{p-2} +z + (-\Delta)^{\tilde{s}}-(-\Delta)^s \\
&= \left(1+ \left((-\Delta)^{\tilde{s}}-(-\Delta)^s\right)\left(\mathcal{L}_{-,s} + z\right)^{-1}\right)\left(\mathcal{L}_{-,s} + z\right).
\end{align*}
Therefore, we have that
\begin{align*}
\left(\mathcal{L}_{-,s} + z\right)^{-1}-\left(\mathcal{L}_{-, \tilde{s}} + z\right)^{-1}=\left(\mathcal{L}_{-,s} + z\right)^{-1} \left(1-\left(1+ \left((-\Delta)^{\tilde{s}}-(-\Delta)^s\right)\left(\mathcal{L}_{-,s} + z\right)^{-1}\right)^{-1}\right).
\end{align*}
As the proof of Lemma \ref{monotonicity}, we can show that
$$
\left\|\left(\mathcal{L}_{-,s} + z\right)^{-1}-\left(\mathcal{L}_{-, \tilde{s}} + z\right)^{-1}\right\|_{L^2 \to L^2} \to 0, \quad \mbox{as} \,\, \tilde{s} \to s.
$$
This indicates that $\mathcal{L}_{-, \tilde{s}} \to \mathcal{L}_{-, s}$ in the norm-resolvent sense as $\tilde{s} \to s$. Thus the proof is completed.
\end{proof}

\begin{lem} \label{conv}
Let $\{s_n\} \subset [s_0, s_*)$ be a sequence such that $s_n \to s_*$ as $n \to \infty$ and $u_{s_n}>0$ for any $n \in \mathbb{N}$. Then there exists $u_* \in X_p$ such that $u_{s_n} \to u_*$ in $X_p$ as $n \to \infty$. Moreover, there holds that $u_*>0$ and it solves the equation
\begin{align} \label{equ2}
(-\Delta)^{s_*} u_*+ \left(\omega+|x|^2\right)  u_*=u_*^{p-1}.
\end{align}
\end{lem}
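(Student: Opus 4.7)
The plan is to extract a subsequential limit $u_*$ using the uniform a priori bounds from Lemma \ref{bdd}, identify $u_*$ as a solution of \eqref{equ2} at $s=s_*$, and then upgrade the mode of convergence to the strong $X_p$ norm by invoking the equation itself. By Lemma \ref{bdd}, the quantities $\|u_{s_n}\|_2$, $\|xu_{s_n}\|_2$, $\|u_{s_n}\|_p$ and $\|(-\Delta)^{s_n/2}u_{s_n}\|_2$ are all of order one. Since $s_n\ge s_0$, the elementary bound $1+|\xi|^{2s_0}\le 2+|\xi|^{2s_n}$ promotes this to a uniform bound in $\Sigma_{s_0}=H^{s_0}(\R^n)\cap L^2(\R^n;|x|^2\,dx)$. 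Invoking the compact embedding $\Sigma_{s_0}\hookrightarrow L^q(\R^n)$ for $2\le q<2_{s_0}^*$ from \cite[Lemma~3.1]{DH}, I extract, up to a subsequence, $u_{s_n}\wto u_*$ weakly in $\Sigma_{s_0}\cap L^p(\R^n)$, $u_{s_n}\to u_*$ strongly in $L^2\cap L^p$, and $u_{s_n}\to u_*$ pointwise a.e. Lower semicontinuity places $u_*$ in $X_p$, and it inherits radiality and realness from the sequence.

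Next I pass to the limit in the equation tested against $\varphi\in C_c^\infty(\R^n)$, writing the nonlocal term as $\int_{\R^n}|\xi|^{2s_n}\hat u_{s_n}\,\overline{\hat\varphi}\,d\xi$. I split this integral into $\{|\xi|\le R\}$ and $\{|\xi|>R\}$. On the first region dominated convergence applies: $|\xi|^{2s_n}\le\max\{1,R^2\}$, $|\xi|^{2s_n}\to|\xi|^{2s_*}$ pointwise, and $\hat u_{s_n}\to\hat u_*$ in $L^2$ by Plancherel. On the tail, Cauchy--Schwarz together with the uniform bound on $\|(-\Delta)^{s_n/2}u_{s_n}\|_2$ reduces the estimate to $\big(\int_{|\xi|>R}|\xi|^{2s_n}|\hat\varphi|^2\,d\xi\big)^{1/2}$, which is uniformly small in $n$ for $R$ large since $\hat\varphi$ is Schwartz. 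The potential and nonlinear terms pass to the limit from strong $L^2$ and $L^p$ convergence combined with the compact support of $\varphi$. Thus $u_*$ weakly solves \eqref{equ2}, and Lemma \ref{regularity} then gives $u_*\in H^{s_*}(\R^n)$. Positivity is then straightforward: the lower bound $\|u_{s_n}\|_p\sim 1$ and strong $L^p$ convergence force $u_*\not\equiv 0$, pointwise a.e.\ convergence gives $u_*\ge 0$, and the Perron--Frobenius property established in Lemma \ref{positive} upgrades this to $u_*>0$ on $\R^n$.

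The final step is to promote the convergence to the full $X_p$ norm. Since strong convergence in $L^2$ and $L^p$ is already in hand, only strong convergence in $L^2(\R^n;|x|^2dx)$ remains. Testing \eqref{equ} against $u_{s_n}$ yields
\begin{align*}
\|(-\Delta)^{s_n/2}u_{s_n}\|_2^2+\|xu_{s_n}\|_2^2=\|u_{s_n}\|_p^p-\omega\|u_{s_n}\|_2^2,
\end{align*}
whose right-hand side converges to $\|u_*\|_p^p-\omega\|u_*\|_2^2=\|(-\Delta)^{s_*/2}u_*\|_2^2+\|xu_*\|_2^2$ by the equation satisfied by $u_*$. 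Pointwise a.e.\ convergence of $|\xi|^{2s_n}|\hat u_{s_n}|^2$ and Fatou give $\liminf\|(-\Delta)^{s_n/2}u_{s_n}\|_2^2\ge\|(-\Delta)^{s_*/2}u_*\|_2^2$, while weak convergence in $L^2(\R^n;|x|^2dx)$ gives $\liminf\|xu_{s_n}\|_2^2\ge\|xu_*\|_2^2$. Both inequalities must therefore be equalities, so $\|xu_{s_n}\|_2\to\|xu_*\|_2$, and weak convergence plus norm convergence in the Hilbert space $L^2(\R^n;|x|^2dx)$ yields strong convergence there. I expect the main obstacle to be precisely this last step: no direct compactness is available for the $|x|^2$-weight in isolation, and the variable-order fractional norm couples to the weighted norm only through the equation, so the proof must carefully balance the two unknown quantities against each other via the energy identity obtained from testing the equation against $u_{s_n}$.
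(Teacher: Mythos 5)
Your proof is correct, but it takes a genuinely different route from the paper. The paper's argument is very short: it writes the branch in its fixed-point form
\[
u_{s_n}=\bigl((-\Delta)^{s_n}+(\omega+|x|^2)+2\lambda\bigr)^{-1}\bigl(2\lambda u_{s_n}+u_{s_n}^{p-1}\bigr),
\]
uses the strong $L^2\cap L^p$ convergence of $u_{s_n}$ (hence of the source term in $L^2+L^{p'}$) together with the $s$-continuity of the resolvent to pass to the limit directly, and reads off both the limit equation and the $X_p$ convergence from the known mapping bounds of the resolvent into $X_p$. Your argument avoids resolvent machinery altogether: you pass to the limit in the distributional form of the equation via a Fourier-space cutoff (uniform smallness of $\int_{|\xi|>R}|\xi|^{2s_n}|\hat\varphi|^2$ coming from $s_n<1$ and $\hat\varphi$ Schwartz), and then recover strong convergence of the $|x|^2$-weighted norm through the Pohozaev-type energy identity combined with Fatou on the fractional term and weak lower semicontinuity on the weighted term. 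The tradeoff: the paper's route is more compact and packages everything into one operator-theoretic step, but it relies on having already established the resolvent's boundedness into $X_p$ and its norm-continuity in $s$; your route is more elementary and self-contained, at the cost of length and a slightly delicate pairing of Fatou with the energy balance. Two small points to keep in mind: (i) the Fatou step gives pointwise a.e.\ convergence of $|\xi|^{2s_n}|\hat u_{s_n}|^2$ only after passing to a subsequence, which is harmless but should be said; (ii) strict positivity $u_*>0$ should be justified through the positivity of the resolvent kernel acting on $2\lambda u_*+u_*^{p-1}\ge 0$, $\not\equiv 0$ (the Perron--Frobenius property of $\mathcal{L}_{-,s}$ in Lemma~\ref{positive} is phrased for $s<s_*$ and is not literally what is needed here), though the paper is equally terse on this point.
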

\begin{proof}
From Lemma \ref{bdd}, we know that $u_{s_n}$ is bounded in $\Sigma_{s_0}$. Thus there exists $u_* \in \Sigma_{s_0}$ such that $u_{s_n} \wto u_*$ in $\Sigma_{s_0}$ and $u_{s_n} \to u_*$ in $L^q(\R^n)$ for any $q \in [2, 2_{s_0}^*)$. Since $u_{s_n} >0$, then $u_* \geq 0$. It follows from Lemma \ref{bdd} that $u_* \neq 0$. Note that
$$
u_{s_n}=\left((-\Delta)^{s_n}+ \left(\omega+|x|^2\right) + 2 \lambda\right)^{-1} \left(2 \lambda u_{s_n} + u_{s_n}^{p-1}\right).
$$
Since $u_{s_n} \to u_*$ in $L^2(\R^n) \cap L^p(\R^n)$ as $n \to \infty$, then
$$
u_*=\left((-\Delta)^{s_*}+ \left(\omega+|x|^2\right) + 2 \lambda\right)^{-1} \left(2 \lambda u_{*} + u_{*}^{p-1}\right).
$$
This implies that $u_*$ solves \eqref{equ2} and $u_{s_n} \to u_*$ in $X_p$ as $n \to \infty$. Thus the proof is completed.
\end{proof}

\begin{lem} \label{extend}
Let $u_0 \in X_p$ be a ground state to \eqref{equ} with $s=s_0$. Then its maximum branch $u_s$ with $s \in [s_0, s_*)$ extends to $s_*=1$.
\end{lem}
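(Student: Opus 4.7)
The plan is to argue by contradiction. Suppose $s_* < 1$, and pick a sequence $s_n \nearrow s_*$ with $s_n \in [s_0, s_*)$. By Lemma \ref{conv}, along a subsequence, $u_{s_n} \to u_*$ strongly in $X_p$, where $u_* > 0$ solves \eqref{equ2} at $s = s_*$. My intention is to apply Lemma \ref{bifurcation} at the pair $(u_*, s_*)$ to generate a $C^1$ continuation $\widetilde{u}_s$ of the branch on a right neighborhood $[s_*, s_* + \delta_1)$. Concatenation with the original branch on $[s_0, s_*)$ would then produce an extension of $u_s$ onto $[s_0, s_* + \delta_1)$ contradicting the maximality of $s_*$, forcing $s_* = 1$.

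To invoke Lemma \ref{bifurcation} at $(u_*, s_*)$, the crucial requirement is that the linearized operator $\mathcal{L}_{+, s_*}$ at $u_*$ have trivial kernel on $L^2_{\mathrm{rad}}(\R^n)$. I would secure this by first showing that $u_*$ is itself a ground state to \eqref{equ} at $s = s_*$, after which Lemma \ref{nondegeneracy} applies directly. Denoting by $J_s$ the action functional on $\Sigma_s$ associated with \eqref{equ} and by $m_s$ the mountain pass (ground state) level, the task reduces to two subclaims: (a) $s \mapsto m_s$ is continuous on $[s_0, s_*]$; and (b) along the continuation, $J_s(u_s) = m_s$ for every $s \in [s_0, s_*)$. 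Since $u_{s_n} \to u_*$ in $X_p$ gives $J_{s_n}(u_{s_n}) \to J_{s_*}(u_*)$, the combination of (a) and (b) forces $J_{s_*}(u_*) = m_{s_*}$, identifying $u_*$ as a ground state.

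For (a), I would compare Rayleigh-type quotients using the norm-resolvent convergence $(-\Delta)^{s_n} \to (-\Delta)^{s_*}$ from Lemma \ref{monotonicity}, the uniform bounds of Lemma \ref{bdd}, and the interpolation \eqref{equiv2}, testing (nearly) optimal sequences across close values of $s$ to produce matching $\limsup$ and $\liminf$ estimates for $m_s$. For (b), the local uniqueness in Lemma \ref{bifurcation}(ii), combined with nondegeneracy at the initial ground state $u_{s_0}$, forces $u_s$ to coincide near $s_0$ with any $C^1$ curve of ground states emanating from $u_{s_0}$; together with continuity of $m_s$ from (a), the subset of $[s_0, s_*)$ on which $J_s(u_s) = m_s$ is both open and closed, hence equal to $[s_0, s_*)$.

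Once $u_*$ is known to be a ground state, Lemma \ref{bifurcation} applies at $(u_*, s_*)$ and yields a branch $\widetilde{u}_s$ for $s \in [s_*, s_* + \delta_1)$. The local uniqueness clause (ii) of that lemma then forces $\widetilde{u}_s \to u_*$ as $s \searrow s_*$ to coincide with the left limit of $u_s$, so the concatenated map is $C^1$ on $[s_0, s_* + \delta_1)$, solving \eqref{equ} and contradicting the definition of $s_*$. I expect the principal obstacle to be subclaim (a): since the underlying space $\Sigma_s$ itself depends on $s$ through the fractional order, the mountain pass characterization must be transferred between genuinely different function spaces, which will require constructing explicit recovery sequences and exploiting the compact embedding $\Sigma_s \hookrightarrow L^q(\R^n)$ (uniformly on compact $s$-intervals) to match the infima of the Rayleigh functionals in the limit.
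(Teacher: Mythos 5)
Your strategy differs in a genuine way from the paper's: you try to show that the limit $u_*$ is a ground state by tracking the mountain-pass (ground-state) level $m_s$ and showing $J_s(u_s)=m_s$ persists up to $s_*$, whereas the paper tracks the \emph{radial Morse index} $\mathcal{N}_{-,\mathrm{rad}}(\mathcal{L}_{+,s})$. Concretely, the paper shows that $\mathcal{L}_{+,\tilde s}\to\mathcal{L}_{+,s}$ in norm-resolvent sense (by the same calculation as for $\mathcal{L}_{-,s}$ in Lemma \ref{positive}), which makes the integer-valued $s\mapsto\mathcal{N}_{-,\mathrm{rad}}(\mathcal{L}_{+,s})$ locally constant on $[s_0,s_*)$, hence identically $1$. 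Lower semicontinuity of the Morse index under norm-resolvent convergence gives $\mathcal{N}_{-,\mathrm{rad}}(\mathcal{L}_{+,s_*})\le 1$, while testing with $u_*$ gives $\langle u_*,\mathcal{L}_{+,s_*}u_*\rangle=-(p-2)\|u_*\|_p^p<0$, so the index equals $1$, identifying $u_*$ as a ground state; then nondegeneracy and Lemma \ref{bifurcation} give the contradiction. This avoids any discussion of the variational level entirely.

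There are two genuine gaps in your route. First, subclaim (a) --- continuity of $s\mapsto m_s$ --- is precisely the kind of variational comparison across $s$-dependent spaces $\Sigma_s$ that the Morse-index argument is designed to avoid; you flag this yourself, and it is not a small technicality, since the quadratic form domain changes with $s$. Second, and more seriously, the open--closed argument for $\{s\in[s_0,s_*):J_s(u_s)=m_s\}$ is incomplete at the ``open'' step. Closedness would follow from continuity of $s\mapsto J_s(u_s)$ and (granting (a)) of $s\mapsto m_s$. But openness does not: if $J_{s_1}(u_{s_1})=m_{s_1}$, the local uniqueness clause of Lemma \ref{bifurcation}(ii) only tells you that $u_s$ is the unique \emph{solution} in a small $X_p$-ball around $u_{s_1}$; it does not tell you that, for $s$ near $s_1$, a minimizer of $J_s$ at level $m_s$ lies inside that ball. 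To conclude that, you would need continuous dependence of the ground state (not just of some solution branch) on $s$, which presupposes a form of the uniqueness you are trying to prove. As written, the appeal to ``any $C^1$ curve of ground states emanating from $u_{s_0}$'' posits a structure whose existence is not established. The paper's Morse-index mechanism circumvents exactly this circularity, which is why it is the preferred tool in the Frank--Lenzmann scheme.
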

\begin{proof}
Define 
$$
\mathcal{L}_{+, s}:=(-\Delta)^s+ \left(\omega+|x|^2\right) -(p-1) |u_s|^{p-2}.
$$
Reasoning as the proof of the norm-resolvent convergence of $\mathcal{L}_{-, s}$ in Lemma \ref{positive}, we can also show that $\mathcal{L}_{+, \tilde{s}} \to \mathcal{L}_{+,s}$ in the norm-resolvent sense as $\tilde{s} \to s$. This gives that
$$
\mathcal{N}_{-, rad}(\mathcal{L}_{+, s})=\mathcal{N}_{-, rad}(\mathcal{L}_{+, s_0})=1, \quad s \in [s_0, s_*).
$$
Let $\{s_n\} \subset [s_0, s_*)$ be such that $s_n \to s_*$. Since $u_0 \in X_p$ is a ground state to \eqref{equ} with $s=s_0$, then $u_0>0$. In view of Lemma \ref{positive}, then $u_{s_n}>0$. From Lemma \ref{conv}, we know that there exists $u_*>0$ solving \eqref{equ2}. Note that $\mathcal{L}_{+, s_n} \to \mathcal{L}_{+, s_*}$ in the norm-resolvent sense as $n \to \infty$. By the lower semicontinuity of the Morse index, we have that
$$
1=\liminf_{n \to\infty} \mathcal{N}_{-, rad}(\mathcal{L}_{-, s_n}) \geq \mathcal{N}_{-, rad}(\mathcal{L}_{+, s_*}).
$$
This implies that $\mathcal{N}_{-, rad}(\mathcal{L}_{+, s_*}) \leq 1$. On the other hand, since $u_*$ solves \eqref{equ2}, then we see that
$$
\langle u_*, \mathcal{L}_{+, s_*} u_* \rangle=-(p-2) \int_{\R^n} |u_*|^p \, dx <0.
$$
Thus we conclude that $\mathcal{N}_{-, rad}(\mathcal{L}_{+, s_*})=1$, which yields that $u_*$ is a ground state to \eqref{equ2}. As a result, we have that $s_*=1$. On the other hand, by the nondegeneracy of $\mathcal{L}_{+, s_*}$, then $u_s$ can be extended beyond $s_*$. This is impossible and the proof is completed.
\end{proof}

Now we are ready to prove Theorem \ref{thm1}.

\begin{proof}[Proof of Theorem \ref{thm1}]
Let $n \geq 1$, $0<s_0<1$ and $2<p<2_{s_0}^*$. Let $u_{s_0}>0$ and $\tilde{u}_{s_0}>0$ be two different ground states to \eqref{equ} with $s=s_0$, which are indeed radially symmetric. From Lemma \ref{nondegeneracy}, we obtain that the associated linearized operators around $u_{s_0}$ and $\tilde{u}_{s_0}$ are nondegenerate. Then, by Lemmas \ref{bifurcation} and \ref{extend}, we have that $u_s \in C^1([s_0, 1); X_p)$ and $\tilde{u}_s \in C^1([s_0, 1); X_p)$. Moreover, by the local uniqueness of solutions derived in Lemma \ref{bifurcation}, we get that $u_s \neq \tilde{u}_s$ for any $s \in [s_0, 1)$. It follows from Lemma \ref{conv} that there exist $u_* \in X_p$ and $\tilde{u}_* \in X_p$ such that $u_s \to u_*$ and $\tilde{u}_{s} \to \tilde{u}_*$ in $X_p$ as $s \to 1^-$. In addition, $u_*>0$ and $\tilde{u}_*>0$ solve \eqref{equ2} with $s_*=1$. Thanks to \cite[Theorem 1.3]{HO1} and \cite[Theorem1.2]{HO2}, then we have that $u_*=\tilde{u}_*$. This implies that $\|u_s-\tilde{u}_s\|_{X_p} \to 0$ as $s \to 1^-$.
Note that the linearized operator $\mathcal{L}_{+, 1}$ around $u_*$ is nondegenerate, see \cite[Theorem 0.2]{KT}. Remark that, from the proof of \cite[Theorem 0.2]{KT}, it is simple to see that the result also holds true for $n=1$. Then, by the implicit function theorem, there exists a unique branch $\hat{u}_{s} \in C^1((1-\delta, 1]; X_p)$ solving \eqref{equ} with $\hat{u}_1=u^*$ for some $\delta>0$. This contradicts with $u_s \neq \tilde{u}_s$ for any $s\in [s_0, 1)$. Thus the proof is completed.
\end{proof}

\end{document}